\numberwithin{equation}{section}
\newtheorem{theorem}{Theorem}[section]
\newtheorem{lemma}[theorem]{Lemma}
\newtheorem{corollary}[theorem]{Corollary}
\newtheorem{remark}[theorem]{Remark}
\newtheorem{proposition}[theorem]{Proposition}
\newtheorem{definition}[theorem]{Definition}
\newtheorem{example}[theorem]{Example}
\newtheorem{assumption}[theorem]{Assumption}
\renewcommand{\d}{\mathrm{d}}
\newcommand{\dd}{\,\mathrm{d}}
\newcommand{\R}{\mathbb{R}}
\newcommand{\F}{\mathcal{F}}
\newcommand{\N}{\mathbb{N}}
\newcommand{\Z}{\mathbb{Z}}
\newcommand{\E}{\mathbb{E}}
\newcommand{\1}{\mathbf{1}}
\renewcommand{\P}{\mathbb{P}}
\title[One-dimensional game-theoretic differential equations]{One-dimensional game-theoretic differential equations}
\author[{\L}ochowski]{Rafa{\l} M. {\L}ochowski}
\address{Rafa{\l} M. {\L}ochowski, Warsaw School of Economics, Poland}
\email{rlocho314@gmail.com}
\author[Perkowski]{Nicolas Perkowski}
\address{Nicolas Perkowski, Free University of Berlin, Germany}
\email{perkowski@math.fu-berlin.de}
\author[Pr{\"o}mel]{David J. Pr{\"o}mel}
\address{David J. Pr{\"o}mel, University of Mannheim, Germany}
\email{proemel@uni-mannheim.de}
\date{\today}
\begin{document}
 
\begin{abstract}
  We provide a very brief introduction to typical paths and the corresponding It{\^o} type integration. Relying on this robust It{\^o} integration, we prove an existence and uniqueness result for one-dimensional differential equations driven by typical paths with non-Lipschitz continuous coefficients in the spirit of Yamada--Watanabe as well as an approximation result in the spirit of Doss--Sussmann. 
\end{abstract}

\maketitle

\noindent\emph{Keywords:} model-free finance, Vovk's outer measure, pathwise stochastic calculus, stochastic differential equations, Yamada--Watanabe theorem, Doss--Sussmann approximation.\\
\emph{Mathematics Subject Classification (2020):} 91A40, 60H10.



\section{Introduction}

In the past decade, ideas from game-theoretic probability (see the books \cite{Shafer2001,Shafer2019}) have led to various related notions of outer measures based on the concept of pathwise super-hedging coming from mathematical finance, see e.g. \cite{Vovk2009,Takeuchi2009,Vovk2012,Perkowski2016}. These outer measures allow to use arbitrage considerations to examine which path properties are satisfied by ``typical (price) paths'', that is, which path properties hold except null sets with respect to these outer measures. For instance, V. Vovk proved that non-constant typical continuous paths have infinite $p$-variation for $p<2$ and finite $p$-variation for $p>2$, see~\cite{Vovk2008}. Additionally, due to the financial nature of these outer measures, they found many applications in mathematical finance under model uncertainty, like robust versions of the pricing-hedging duality \cite{Beiglbock2017,Bartl2019,Bartl2020,Cheridito2019}, the role of measurability to avoid arbitrage~\cite{Vovk2017} and game-theoretic portfolio theory \cite[Chapter~17]{Shafer2019}.

Since typical paths are too irregular to apply classical calculus, a novel approach to work with typical paths is required. This motivated the development of a game-theoretic It{\^o} calculus, which can be viewed as a robust version of the stochastic It{\^o} calculus developed for (semi-)martingales like the Brownian motion, cf. for instance \cite{Karatzas1988,Revuz1999}. Game-theoretic ideas coming in particular from mathematical finance, indeed, allowed to set up a rather mature game-theoretic (or model-free) It{\^o} integration for typical paths, see \cite{Perkowski2016,Vovk2016,Lochowski2018}. Since the stochastic It{\^o} integration was originally motivated by stochastic differential equations, it rose the natural question whether the game-theoretic It{\^o} integration is sufficiently powerful to treat differential equations driven by typical paths. A first affirmative answer to this question was given by \cite{Bartl2019b}. Assuming the coefficients of the differential equations are Lipschitz continuous, \cite{Bartl2019b} introduces an outer measure allowing to show existence and uniqueness results for differential equations driven by typical paths even in a Hilbert space setting. Based on an alternative outer measure, the work \cite{Galane2018} provides also existence and uniqueness results for multi-dimensional differential equations with Lipschitz coefficients driven by typical paths. 

The general interest to gain a deeper understanding of differential equations driven by typical paths stems from their benefits on a theoretical as well as on an applied level. To illustrate these benefits, let us recall that differential equations perturbed by random noises are frequently used as mathematical models for real-world evolutions. As commonly observed, mathematical modeling comes with the issue of model uncertainty. One approach in  mathematical finance to treat such model uncertainty is to simultaneously work under a family of probability measures instead of under one fixed probability measure, cf. e.g.~\cite{Soner2011}. The solution theory for differential equations driven by typical paths allows immediately to simultaneously work under sensible families of probability measures, see Proposition~\ref{prop:properties of outer measure}, and such represents in this direction a more general approach to (stochastic) differential equations than the classical It{\^o} theory.  

In the present work we study one-dimensional differential equations driven by typical paths. Similarly to the theory of stochastic differential equations, we shall see that the one-dimensional case is special in certain aspects. For instance, it allows to treat stochastic differential equations with non-Lipschitz continuous diffusion coefficients, like the Cox--Ingersoll--Ross process, cf. Example~\ref{ex:CIR model}, which is a frequently used model for the evolution of interest rates and of the volatility on financial markets.

First, we show the existence of a unique solution to differential equations driven by typical paths with H{\"o}lder continuous diffusion coefficients of order~$1/2$, see Subsection~\ref{subsec:YW theorem}. This corresponds to the famous Yamada--Watanabe theorem~\cite{Yamada1971} from probability theory, which is only known to hold true in a basically one-dimensional setting. Like for the classical Yamada--Watanabe result, it is remarkable that differential equations driven by typical paths possess unique solutions under a non-Lipschitz assumption while usually uniqueness results for differential equations require the involved vector fields to be almost Lipschitz continuous. Intuitively, this demonstrates the regularizing effect of typical paths similar to the one of a Brownian motion. 
 
Secondly, we show a stability result between ordinary differential equations and differential equations driven by typical paths, see Subsection~\eqref{subsec:DS theorem}, saying that suitable ordinary differential equations can approximate a differential equation driven by typical paths. This result can be viewed as a game-theoretic version of the Doss--Sussmann theorem~\cite{Doss1977,Sussmann1978} from probability theory. The stability theorem of Doss and Sussmann can be again motivated by questions regarding the suitable mathematical model for a real-world evolutions. While for example stochastic differential equations driven by a Brownian motion form convenient mathematical models, the underlying real-world evolutions might be actually more suitably represented by a differential equation perturbed by a non-Markov noise process of bounded variation. Hence, a stability result like the Doss-Sussmann theorem is of upmost importance from a modeling perspective. 

\medskip
\noindent\textit{Organization of the paper:} Section~\ref{sec:game-theoretic probability} introduces the game-theoretic outer measure and the notion of typical paths. In Section~\ref{sec:Ito integration} we present the essential results regarding game-theoretic integration and provide an It{\^o} type formula. Section~\ref{sec:SDE} treats one-dimensional differential equations driven by typical paths.

\section{Game-theoretic probability and typical paths}\label{sec:game-theoretic probability}

We consider the sample space $\Omega:=C([0,\infty);\R)$, which is the space of all continuous functions $\omega \colon [0,\infty)\to \R$ with $\omega(0)=0$, and the coordinate process on $\Omega$ is denoted by $S=(S_t)_{t\in [0,\infty)}$ where $S_t(\omega):=\omega(t)$. We equip the space~$\Omega$ with the right-continuous filtration $(\mathcal{F}_t)_{t\in [0,\infty)}$ with $\mathcal{F}_t := \bigcup_{s>t} \sigma(S_u: u \le s)$ and set $\mathcal{F}:= \bigvee_{t\in [0,\infty)} \mathcal{F}_t$. Stopping times $\tau$ and the associated $\sigma$-algebras $\F_\tau$ are defined as usual. A mapping $F\colon \Omega \to \R$ is called \textit{game-theoretic variable} if $F$ is $\mathcal{F}$-measurable. The indicator function of a set $A$ is denoted by $\1_A$, $x\wedge y:=\min \{x,y\}$ for $x,y\in \R$ and the space $\R^d$ is equipped with the Euclidean norm~$\|\cdot\|$.
\medskip

A process $H \colon \Omega \times [0,\infty) \rightarrow \R$ is called a \emph{simple strategy} if it is of the form 
\begin{equation*}
  H_t(\omega) = \sum_{n= 0}^{\infty} F_n(\omega) \1_{(\tau_n(\omega),\tau_{n+1}(\omega)]}(t), \quad (\omega,t)\in \Omega\times [0,\infty),
\end{equation*}
where $F_n\colon \Omega \rightarrow \R$ are $\F_{\tau_n}$-measurable bounded functions for $n\in\mathbb{N}$ and $0 = \tau_0(\omega) \leq \tau_1(\omega) \leq \dots$ are stopping times such that for every $\omega \in \Omega$ one has $\lim_{n\to \infty} \tau_n(\omega) = \infty$ and for every interval $[s,t]\subset[0,\infty)$ there are at most finitely many stopping times $(\tau_n)_{n=N,\dots,M}$ satisfying $\tau_n\in [s,t]$. For such a simple strategy $H$ the corresponding capital process
\begin{equation*}
  (H \cdot S)_t(\omega) = \sum_{n=0}^\infty F_n(\omega) (S_{\tau_{n+1}(\omega) \wedge t}(\omega) - S_{\tau_n(\omega) \wedge t}(\omega))
\end{equation*}
is well-defined for every $\omega \in \Omega$ and every $t \in [0,\infty)$. A simple strategy $H$ is called $\lambda$-\textit{admissible} for $\lambda \geq 0$ if $(H\cdot S)_t(\omega) \ge - \lambda$ for all $t \in [0,\infty)$ and all $\omega \in \Omega$. 

The previous definitions come all with natural interpretation from a game-theoretic and a financial perspective. The sample space~$\Omega$ can be interpreted as the set of all possible price evolution on a financial market. In this context, a simple strategy~$H$ represents a trading strategy of an investor, who changes her position at preselected stopping times, and the process $((H \cdot S)_{t})_{t\in [0,\infty)}$ stands for the capital generated by trading according to $H$ into the price process $(S_t)_{t\in [0,\infty)}$. The admissible condition can be understood as a maximal credit limit as it is commonly imposed in mathematical finance. 

Like in classical mathematical finance, to consider trading only with respect to simple strategies is often not sufficient. Therefore, we need to work with all capital processes in the the liminf-closure of capital processes generated by simple strategies. This is comparable with the It\^o integral, which is an operator on the $L^2$-closure of simple integrands, except that we need to work in the present setting with a pointwise closure instead of a closure with respect to a probability measure. For this purpose, we introduce $\mathcal{H}_\lambda$ for the set of $\lambda$-admissible simple strategies and the set of capital gain processes by 
\begin{equation*}
  \mathcal{V}_\lambda :=\big\{ \mathcal{C}_\cdot=\liminf_{n\to \infty } (H^n\cdot S)_\cdot \,:\, (H^n)_{n\in \N} \subset \mathcal{H}_\lambda \big\}
\end{equation*}
for $\lambda\geq 0$. This allows us to introduce an outer measure and the notion of typical paths, as initiated by Vovk~\cite{Vovk2008}. While Vovk's original definition was based on a closure of simple strategies using countable convex combinations of them, we rely here on the set $\mathcal{V}_\lambda$. Of course, both ways to introduce an outer measure are naturally justified, see e.g. \cite[Section~2.3]{Perkowski2016}. The following definition presents the modified Vovk's outer measure as  introduced by \cite{Perkowski2015,Perkowski2016}.

\begin{definition}\label{def:Vovk's outer measure}
  Let $\tilde{\Omega}\subset \Omega$ be a non-empty set. The \textup{outer measure} $\overline{P}(\cdot;\tilde{\Omega})$ of the set~$A \subseteq \tilde{\Omega}$ is defined as the cheapest super-hedging price for $\1_A$, that is
  \begin{align*}
    \overline{P}(A;\tilde{\Omega}) &:= \inf\Big\{\lambda \geq 0: \exists \mathcal{C}\in \mathcal{V}_\lambda  \text{ s.t. } \forall \omega \in \tilde{\Omega} \text{ } \lambda + \liminf_{t\to \infty} \mathcal{C}_t(\omega)\ge \1_A(\omega)\, \Big\}.
  \end{align*}  
  A set of paths $A \subseteq \tilde{\Omega}$ is called a \textup{null set in} $\tilde{\Omega}$ if $\overline{P}(A;\tilde{\Omega})=0$. A property (P) holds for \textup{typical paths in} $\tilde{\Omega}$ if the set $A$ where (P) is violated is a null set w.r.t. $\overline{P}(\cdot;\tilde{\Omega})$. 
  
  Furthermore, we set $\overline{P}(A):=\overline{P}(A;\Omega)$ and say a property (P) holds for \textup{typical paths} if it holds for typical paths in $\Omega$.
\end{definition}

Keeping the financial interpretation of the above definitions in mind, the outer measure~$\overline{P}$ represents the cheapest super-hedging price, essentially as in the classical setting of mathematical finance but with one important difference: we require here super-hedging for all $\omega \in \Omega$ and not just almost surely. An additional reason leading to the great interest of the outer measure~$\overline{P}$ in mathematical finance under model uncertainty is that it dominates all local martingale measures on the space $\Omega$. Recall, a probability measure~$Q$ is called a (local) martingale measure if the coordinate process $(S_t)_{t\in [0,\infty)}$ is a (local) martingale w.r.t.~$Q$. Local martingale measures appear in mathematical finance to characterize arbitrage-free market models and as ``pricing'' measures for financial derivatives. 

Furthermore, a null set can essentially be viewed as a model-independent arbitrage opportunity of the first kind, cf. \cite[Lemma~3.2]{Perkowski2015}. Let us recall that, given a probability measure~$\P$ on $(\Omega, \F)$, we say that~ $(S_t)_{t\in [0,\infty)}$ satisfies \textit{no arbitrage of the first kind~(NA1)} under~$\P$ if the set 
\begin{equation*}
  \mathcal{W}^{\infty}_1 := \bigg\{ 1 + \int_0^\infty H_u \dd S_u\, : \, H \in \mathcal{H}_1\bigg \}
\end{equation*}
is bounded in probability, that is if $\lim_{n \to \infty} \sup_{X\in \mathcal{W}^{\infty}_1} \mathbb{P}( X \geq n)=0$. Here $\mathcal{H}_1$ stands for the set of all integrable processes w.r.t.~$\P$. 

\medskip

The following proposition collects properties of the outer measure~$\overline{P}$ and presents its link to mathematical finance, which we discussed vaguely in the previous paragraph. 

\begin{proposition}[Proposition~3.3 in \cite{Perkowski2015}]\label{prop:properties of outer measure}~

  \begin{enumerate} 
    \item[(i)] $\overline{P}$ is an outer measure with $\overline{P}(\Omega)=1$, i.e. $\overline{P}$ is non-decreasing, countably sub-additive, and $\overline{P}(\emptyset) = 0$.
    \item[(ii)] Let $\mathbb{P}$ be a probability measure on $(\Omega, \F)$ such that the coordinate process $(S_t)_{t\in [0,\infty)}$ is a local martingale under~$\mathbb{P}$, and let $A \in \F$. Then $\mathbb{P}(A) \le \overline{P}(A)$.
    \item[(iii)] Let $A \in \F$ be a null set, and let $\mathbb{P}$ be a probability measure on $(\Omega, \F)$ such that the coordinate process~$(S_t)_{t\in [0,\infty)}$ satisfies (NA1) under $\P$. Then $\mathbb{P}(A) = 0$.
  \end{enumerate}
\end{proposition}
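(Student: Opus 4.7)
The proof splits naturally into three parts. For (i), monotonicity is immediate: if $A \subseteq B$ and $\lambda + \liminf_{t\to\infty}\mathcal{C}_t(\omega) \geq \1_B(\omega)$ for all $\omega$, the same witness $(\lambda,\mathcal{C})$ dominates $\1_A$. Taking $\lambda=0$ and $\mathcal{C} \equiv 0 \in \mathcal{V}_0$ gives $\overline{P}(\emptyset) = 0$. For countable sub-additivity, given $\overline{P}(A_n) < \lambda_n$ with $\mathcal{C}^n = \liminf_k (H^{n,k} \cdot S) \in \mathcal{V}_{\lambda_n}$ super-hedging $\1_{A_n}$, one extracts diagonally a sequence in $\mathcal{H}_{\Lambda}$ with $\Lambda := \sum_n \lambda_n$ whose capital processes' pointwise liminf super-hedges $\1_{\bigcup_n A_n}$; admissibility adds because the partial sums $\sum_{m\leq N}(H^{m,k}\cdot S)$ are bounded below by $-\Lambda$. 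Finally, $\overline{P}(\Omega)\leq 1$ is witnessed by $(\lambda,\mathcal{C})=(1,0)$, while the reverse bound follows by evaluating any candidate super-hedge at the constant zero path: every simple strategy produces zero capital there, forcing $\lambda \geq 1$.

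For (ii), fix $A \in \F$ and $\lambda > \overline{P}(A)$, and choose a witness $\mathcal{C} = \liminf_n (H^n \cdot S)$ with $H^n \in \mathcal{H}_\lambda$. Each $(H^n\cdot S)$ is a finite sum of elementary integrals of bounded $\F_{\tau_k}$-measurable integrands against the $\P$-local martingale $S$, hence itself a $\P$-local martingale; being bounded below by $-\lambda$ and starting at $0$, it is in fact a true $\P$-supermartingale, so $\E^{\P}[(H^n\cdot S)_t]\leq 0$ for every $t$. Two Fatou-type inequalities, each justified by the uniform lower bound $-\lambda$, now yield
\begin{equation*}
  \E^{\P}\big[\liminf_{t\to\infty}\mathcal{C}_t\big] \,\leq\, \liminf_{t\to\infty}\E^{\P}[\mathcal{C}_t] \,\leq\, \liminf_{t\to\infty}\liminf_n \E^{\P}[(H^n\cdot S)_t] \,\leq\, 0.
\end{equation*}
Combined with $\lambda + \liminf_{t\to\infty}\mathcal{C}_t \geq \1_A$, integrating produces $\P(A) \leq \lambda$, and sending $\lambda \downarrow \overline{P}(A)$ completes (ii).

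For (iii), assume $\overline{P}(A)=0$ and, for contradiction, $\P(A)>0$. For each $k \in \N$ select $\mathcal{C}^k = \liminf_n (H^{n,k}\cdot S) \in \mathcal{V}_{2^{-k}}$ with $2^{-k}+\liminf_{t\to\infty}\mathcal{C}^k_t \geq \1_A$. The rescaled strategies $\widetilde{H}^{n,k} := 2^k H^{n,k} \in \mathcal{H}_1$ satisfy $\liminf_n \liminf_{t\to\infty}(\widetilde{H}^{n,k}\cdot S)_t(\omega) \geq 2^k - 1$ for every $\omega \in A$. Two successive Egorov-type uniformisations on $A$ (first in the index $n$, then in the time $t$) provide, for each $k$, an index $n_k$, a deterministic time $T_k<\infty$ and a subset $A_k \subseteq A$ with $\P(A_k) \geq \P(A)/2$ such that $(\widetilde{H}^{n_k,k}\cdot S)_{T_k}(\omega) \geq 2^k-2$ for all $\omega \in A_k$. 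Setting $\widehat{H}^k := \widetilde{H}^{n_k,k}\1_{(0,T_k]}$ yields a simple $1$-admissible strategy for which $1 + \int_0^\infty \widehat{H}^k_u\dd S_u \in \mathcal{W}^{\infty}_1$ exceeds $2^k-1$ with $\P$-probability at least $\P(A)/2$, contradicting the boundedness in probability of $\mathcal{W}^{\infty}_1$ as $k\to\infty$.

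The main technical hurdle sits in (iii): the double liminf inherent in $\mathcal{V}_\lambda$ must be converted into a single $1$-admissible strategy producing large capital at a single (deterministic or stopping) time on a subset of $A$ whose $\P$-mass is bounded below uniformly in $k$. The liminf manipulations in (i) and (ii) become routine once admissibility supplies the required uniform lower bound to enable Fatou's lemma.
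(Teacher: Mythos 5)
Your argument is correct in substance; note that the paper itself gives no proof of this proposition but simply cites \cite[Proposition~3.3]{Perkowski2015}, so the comparison is against the argument in that reference. For (i) and (ii) you follow exactly the standard route: the diagonal construction $G^k:=\sum_{m\le k}H^{m,k}$ with the tail of the sum controlled by $-\sum_{m>M}\lambda_m$ for sub-additivity, and the chain ``simple integral against a local martingale $=$ local martingale bounded below $\Rightarrow$ supermartingale, then Fatou twice'' for domination of martingale measures; your evaluation at the constant zero path for $\overline{P}(\Omega)\ge 1$ is a slightly more elementary substitute for invoking (ii) with a fixed martingale measure, and works because $\omega\equiv 0$ lies in $\Omega$ and annihilates every capital process. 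For (iii) you diverge from the cited source: there, one first proves a structural lemma (a set is null if and only if $\infty\cdot\1_A$ can be super-hedged at arbitrarily small cost, by summing the $2^{-k}$-admissible witnesses into a single element of $\mathcal{V}_{1}$) and then contradicts (NA1); you instead run a direct contradiction with two measure-continuity (Egorov-type) uniformisations to extract, for each $k$, a single simple $1$-admissible strategy and a deterministic horizon $T_k$ producing wealth of order $2^k$ on a set of $\P$-measure bounded below. Both routes are valid; yours avoids the summation lemma at the cost of the uniformisation bookkeeping. Two small points: the definition of $\mathcal{V}_\lambda$ puts the liminf over $n$ \emph{inside} the liminf over $t$, so your displayed inequality $\liminf_n\liminf_{t\to\infty}(\widetilde H^{n,k}\cdot S)_t\ge 2^k-1$ has the two limits transposed --- your uniformisation then has to proceed first in $t$ (choosing $T_k$ by continuity of measure along $\{T(\omega)\le M\}\uparrow A$) and then in $n$ at the fixed time $T_k$, rather than in the order you state, but the argument goes through verbatim after this swap; and you should observe that the truncated strategy $\widehat H^k=\widetilde H^{n_k,k}\1_{(0,T_k]}$ is again a simple $1$-admissible strategy (the deterministic time $T_k$ can be inserted as an additional stopping time), hence its terminal wealth genuinely belongs to the class $\mathcal{W}^{\infty}_1$ whose boundedness in probability (NA1) you contradict.
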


\subsection{Dambis--Dubins--Schwarz theorem}

A very remarkable result in the context of game-theoretic probability is the Dambis--Dubins--Schwarz theorem due to Vovk~\cite{Vovk2012}. It connects the outer measure~$\overline{P}$ with the Wiener measure~$\mathbb{W}$ on $(\Omega,\mathcal{F})$, in a similar spirit like the classical Dambis--Dubins--Schwarz theorem connects (local) martingale measures with the Wiener measure relying on suitable time-changes, see e.g. \cite[Chapter~3, Theorem~4.6]{Karatzas1988}. In order to give the precise formulation of Vovk's game-theoretic Dambis--Dubins--Schwarz theorem, we recall the definition of time-superinvariant sets, cf.~\cite[Section~3]{Vovk2012}.

\begin{definition}\label{def:time-invariant}
  A continuous non-decreasing function $\phi \colon [0,\infty) \to [0,\infty)$ satisfying $\phi(0)=0$ is said to be a \textup{time-change}. The set of all time-changes will be denoted by $\mathcal{G}_0$ and the group of all time-changes that are strictly increasing and unbounded will be denoted by $\mathcal{G}$. A subset $A\subset \Omega$ is called \textup{time-superinvariant} if for each $\omega \in \Omega$ for all $\phi \in\mathcal{G}_0$ it holds that 
  \begin{equation}\label{eq:invariance}
    \omega \circ \phi \in A \quad \Rightarrow \quad \omega \in A.
  \end{equation}
  A set $A\subset \Omega$ is called \textup{time-invariant} if~\eqref{eq:invariance} holds true for all $\phi\in\mathcal{G}$.
\end{definition}

For a comprehensive and intuitive explanation of time-superinvariance we refer the interested reader to \cite[Remark~3.3]{Vovk2012}. With these definitions at hand, we can present Vovk's game-theoretic Dambis--Dubins--Schwarz theorem.
 
\begin{theorem}[Theorem~3.1 in \cite{Vovk2012}]\label{thm:DDS theorem}
  Each time-superinvariant set $A\subset \Omega$ satisfies $\overline{P}(A)=\mathbb{W}(A)$.
\end{theorem}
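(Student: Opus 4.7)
The plan is to prove the two inequalities $\mathbb{W}(A) \leq \overline{P}(A)$ and $\overline{P}(A) \leq \mathbb{W}(A)$ separately, where only the second requires time-superinvariance.

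The first inequality is immediate from the results already at hand. Under $\mathbb{W}$ the coordinate process $(S_t)_{t\in[0,\infty)}$ is a standard Brownian motion, in particular a local martingale. Applying Proposition~\ref{prop:properties of outer measure}(ii) to $\mathbb{P}=\mathbb{W}$ directly yields $\mathbb{W}(A)\leq \overline{P}(A)$ for any $A\in\mathcal{F}$. Note that this half uses neither time-invariance nor time-superinvariance.

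The reverse inequality is where the argument becomes substantive. The guiding intuition is that any $\omega\in\Omega$ of finite quadratic variation admits a DDS-type decomposition $\omega = \tilde\omega\circ\langle\omega\rangle$, where $\langle\omega\rangle \in \mathcal{G}_0$ is its pathwise quadratic variation and $\tilde\omega$ is the corresponding ``Brownian avatar''. Time-superinvariance of $A$ then yields the crucial one-sided implication $\omega \in A \Rightarrow \tilde\omega \in A$, so that super-hedging $\mathbf{1}_A$ on all of $\Omega$ can essentially be reduced to super-hedging the event that the Brownian avatar lies in $A$, whose $\mathbb{W}$-probability is $\mathbb{W}(A)$.

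To turn this into a rigorous super-hedging construction, I would fix $\varepsilon > 0$ and use outer regularity of $\mathbb{W}$ to pick an open $A'\supseteq A$ with $\mathbb{W}(A') < \mathbb{W}(A)+\varepsilon$. Under $\mathbb{W}$, approximate $\mathbf{1}_{A'}$ by capital processes of simple strategies whose stopping times are not dyadic \emph{in time} but rather dyadic \emph{in space}, i.e.\ consecutive hitting times of a shrinking dyadic grid for $S$; these stopping times are defined on every $\omega$. The sequence $(H^n)$ so obtained lies in $\mathcal{H}_{\mathbb{W}(A)+\varepsilon}$, and its $\liminf_n (H^n\cdot S)_\cdot$ is an element of $\mathcal{V}_{\mathbb{W}(A)+\varepsilon}$. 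On each $\omega$, if $\omega$ has infinite quadratic variation (a null set, a priori), one handles it separately; if $\omega\in A$ has finite quadratic variation, time-superinvariance places its avatar $\tilde\omega$ in $A'$, and the spatially-dyadic simple strategies on $\omega$ evaluate to the same capital as on $\tilde\omega$ under $\mathbb{W}$, so the limit super-hedges $\mathbf{1}_A(\omega)$. Letting $\varepsilon\to 0$ completes the second inequality.

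The main obstacle is making precise the assertion that the spatially-dyadic simple strategies, designed to $\mathbb{W}$-super-hedge $\mathbf{1}_{A'}$, produce a pathwise $\liminf$ super-hedge of $\mathbf{1}_A$ on \emph{every} $\omega\in\Omega$. This hinges on two ingredients: (i) that the capital process along dyadic-level stopping times converges on every path of finite quadratic variation, which is essentially the pathwise It\^o isometry of \cite{Vovk2016,Perkowski2016}; and (ii) the invariance implication $\omega = \tilde\omega\circ\langle\omega\rangle \in A \Rightarrow \tilde\omega \in A$, which uses time-superinvariance in precisely the direction permitted by the definition, since $\langle\omega\rangle$ may have flat parts and is thus only in $\mathcal{G}_0$ rather than $\mathcal{G}$.
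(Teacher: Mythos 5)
First, a point of comparison: the paper does not prove this statement at all. It is quoted from \cite[Theorem~3.1]{Vovk2012}, and the verification that Vovk's result survives the passage to the modified outer measure of Definition~\ref{def:Vovk's outer measure} is delegated to \cite[Theorem~2.6]{Beiglbock2017}. So there is no in-paper argument to match yours against; I can only judge your proposal on its own terms. Your easy direction is correct and complete: Proposition~\ref{prop:properties of outer measure}(ii) applied to $\mathbb{P}=\mathbb{W}$ gives $\mathbb{W}(A)\le\overline{P}(A)$ (modulo the cosmetic caveat that the theorem is stated for arbitrary $A\subset\Omega$, so one should either assume $A\in\F$ or read $\mathbb{W}$ as an outer measure). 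Your use of time-superinvariance is also in the correct direction: writing $\omega=\tilde\omega\circ\langle\omega\rangle$ with $\langle\omega\rangle\in\mathcal{G}_0$, the implication $\omega\in A\Rightarrow\tilde\omega\in A$ is exactly what the definition permits.

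The hard direction, however, has a genuine gap at the very point you label ``the main obstacle'', and naming an obstacle is not the same as removing it. Any superhedge of $\1_{A'}$ built under $\mathbb{W}$ --- the martingale $\E_{\mathbb{W}}[\1_{A'}\mid\F_t]$, or simple strategies approximating it along spatially dyadic stopping times --- carries a guarantee that holds only $\mathbb{W}$-almost surely. Your argument needs the resulting pathwise $\liminf$ to dominate $\1_{A'}(\tilde\omega)$ at the \emph{specific} avatar $\tilde\omega$ of each individual $\omega\in A$, and there is no reason the $\mathbb{W}$-null exceptional set avoids the collection of all such avatars: that collection is not governed by any measure, and in a pathwise setting an almost-sure statement gives no control over it. Upgrading an a.s.\ superhedge to one valid at \emph{every} avatar is essentially the content of the theorem, so as written the reduction is circular. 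Vovk's actual proof avoids this by constructing the pathwise strategy directly: first for time-superinvariant sets determined by finitely many spatial (level-crossing) observations, where the Wiener probability can be superhedged by an explicit strategy that works for every path by construction, and then passing to general $A$ by approximation and a compactness argument. Some such explicit construction is indispensable.

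A second, smaller gap concerns the decomposition itself. Not every $\omega$ whose discrete quadratic variations converge admits a continuous avatar: a non-constant path of bounded variation has $\langle\omega\rangle\equiv 0$; more generally $\omega$ need not be constant on the flats of $\langle\omega\rangle$; and if $\langle\omega\rangle_\infty<\infty$ the avatar is defined only on a bounded interval and its extension to $[0,\infty)$ is not canonical. All of these path classes, not only the infinite-quadratic-variation paths you mention, must be shown to be $\overline{P}$-null and absorbed by auxiliary strategies of arbitrarily small initial capital before the avatar argument can even be set up.
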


Of course, since we consider here a slight modification of Vovk's original outer measure, one needs to verify that the game-theoretic Dambis--Dubins--Schwarz theorem due to Vovk still holds for~$\overline{P}$, see \cite[Theorem~2.6]{Beiglbock2017}. Keeping in mind Theorem~\ref{thm:DDS theorem}, we observe the following regarding time-superinvariant sets: the definition of time-superinvariance ensures that all martingale measures on $\Omega$ assign a time-superinvariant set exactly the same probability. This also motivates the name ``Dambis--Dubins--Schwarz theorem'', which roughly says that every one-dimensional martingale is a time-changed Brownian motion.

\medskip

If one considers only sets of nowhere constant and divergent paths, the notions of time-superinvariance and time-invariance turn out to be equivalent. 

\begin{definition}\label{def:DS}
  A path $\omega\in \Omega$ is said to be \textup{nowhere constant} if there is no interval $(s,t)\subset [0,\infty)$ such that $\omega$ is constant on $(s,t)$ and $\omega$ is said to be \textup{divergent} if there is no $c \in \mathbb{R}$ such that $\lim_{t\to\infty} \omega(t)=c$. The set $\mathrm{DS} \subset \Omega$ denotes the set of all $\omega\in \Omega$ that are nowhere constant and divergent.
\end{definition}

\begin{lemma}[Lemma 3.5 in \cite{Vovk2012}]\label{lem:timeinvariant}
  A set $A \subset \mathrm{DS}$ is time-superinvariant if and only if it is time-invariant. 
\end{lemma}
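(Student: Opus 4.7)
The plan is to observe that the two directions have very different flavor: one is tautological, and the other requires showing that the nowhere-constant and divergent hypotheses force the time-change $\phi$ into $\mathcal{G}$.

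For the easy direction, if $A$ is time-superinvariant then the implication \eqref{eq:invariance} holds for every $\phi \in \mathcal{G}_0$, and since $\mathcal{G} \subset \mathcal{G}_0$ it in particular holds for every $\phi \in \mathcal{G}$, so $A$ is time-invariant. This needs no use of the assumption $A \subset \mathrm{DS}$.

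For the non-trivial direction, suppose $A \subset \mathrm{DS}$ is time-invariant, fix $\omega \in \Omega$ and $\phi \in \mathcal{G}_0$ with $\omega \circ \phi \in A$, and the goal is to upgrade $\phi$ to an element of $\mathcal{G}$ so that time-invariance applies. First I would argue that $\phi$ is strictly increasing: indeed, if $\phi$ were constant on some interval $(s,t) \subset [0,\infty)$, then $\omega \circ \phi$ would be constant on $(s,t)$, contradicting the fact that $\omega \circ \phi \in A \subset \mathrm{DS}$ is nowhere constant. Next I would argue that $\phi$ is unbounded: if $\phi$ were bounded, say $\lim_{t \to \infty} \phi(t) = c \in [0,\infty)$, then by continuity of $\omega$ we would get $\lim_{t \to \infty} \omega(\phi(t)) = \omega(c)$, contradicting the divergence of $\omega \circ \phi$.

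Combining the two properties with $\phi(0)=0$ and continuity yields $\phi \in \mathcal{G}$; applying the assumed time-invariance to this $\phi$ then gives $\omega \in A$, establishing time-superinvariance. The only subtle point is the short verification that the two \textup{DS} conditions upgrade $\phi$ from $\mathcal{G}_0$ to $\mathcal{G}$; the rest is bookkeeping against the definitions, and no approximation argument or Dambis--Dubins--Schwarz input is needed.
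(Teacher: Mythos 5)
Your proof is correct; the paper itself does not reproduce an argument for this lemma but simply cites Vovk, and your reasoning is exactly the standard one behind that citation: the only content is that for $\omega\circ\phi\in A\subset\mathrm{DS}$ the nowhere-constant property rules out any interval of constancy of $\phi$ (hence $\phi$ is strictly increasing) and divergence rules out boundedness of $\phi$ (since $\phi(t)\uparrow c$ would force $\omega(\phi(t))\to\omega(c)$ by continuity), so $\phi\in\mathcal{G}$ and time-invariance applies, while the converse is immediate from $\mathcal{G}\subset\mathcal{G}_0$.
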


\section{It{\^o} integration w.r.t. typical paths}\label{sec:Ito integration}

In the spirit of stochastic It{\^o} integration, it is possible to develop a game-theoretic integration theory for typical paths, see \cite{Perkowski2016,Vovk2016,Lochowski2018} and also \cite[Chapter~14]{Shafer2019}. Like the classical It{\^o} integration (\cite[Chapter~IV]{Revuz1999}), the game-theoretic integration requires as a fundamental ingredient the existence of quadratic variation for typical paths.

\subsection{Quadratic variation}

For a continuous path $\omega \colon [0,\infty) \to \R$ and $n\in \mathbb{N}$ we introduce the Lebesgue stopping times
\begin{align}\label{eq:Lebesgue stopping times}
  \sigma_0^n(\omega):=0 \quad \text{and}\quad
  \sigma_k^n(\omega):= \inf \big\{t\geq \sigma_{k-1}^n\,:\,\omega(t) \in 2^{-n}\Z \text{ and } \omega(t)\neq \omega (\sigma_{k-1}^n)\big\},
\end{align}
for $k\in \mathbb{N}$ and $\omega \in \Omega$. For $n\in \mathbb{N}$ the discrete quadratic variation of $\omega$ is given by
\begin{equation*}
  V^n_t(\omega) :=\sum_{k=0}^\infty \big(\omega({\sigma^n_{k+1}(\omega)\wedge t})-\omega({\sigma_{k}^n(\omega)\wedge t})\big)^2,\quad t\in [0,\infty).
\end{equation*}
To establish the convergence of the sequence~$(V^n_{\cdot}(\omega))_{n\in \N}$ of discrete quadratic variations, we recall the concept of locally uniform convergence in $C([0,\infty);\R)$. A sequence $(f_n)_{n\in \N}\subset C([0,\infty);\R)$ is said to \textit{converge locally uniformly} to $f\in C([0,\infty);\R)$ if 
\begin{equation*}
  \lim_{n\to \infty} \sup_{x\in [0,T]}\|f_n(x)-f(x)\|=0,\quad \text{for every }T>0. 
\end{equation*}
In this case, $f$ is called the \textit{locally uniform limit} of $(f_n)_{n\in \N}$.

\begin{proposition}\label{prop:quadratic variation}
   For typical paths $\omega\in \Omega$, the \textup{quadratic variation} 
  \begin{equation*}
    \langle S \rangle_t(\omega) := \lim_{n\to \infty} V^n_t(\omega),\quad t\in [0,\infty),
  \end{equation*}
  exists as a locally uniform limit in $C([0,\infty);\R)$. Moreover, for typical paths $\omega\in \Omega$, the quadratic variation $\langle S \rangle(\omega)\colon [0,\infty) \to \R$ is a non-negative and non-decreasing function. 
\end{proposition}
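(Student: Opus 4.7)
The plan is to deduce the convergence via Vovk's Dambis--Dubins--Schwarz theorem (Theorem \ref{thm:DDS theorem}), reducing to a classical pathwise quadratic variation convergence for Brownian motion under the Wiener measure $\mathbb{W}$. The two structural claims are essentially free: for each $n$ and $\omega$ the function $t\mapsto V^n_t(\omega)$ is a non-decreasing sum of non-negative squared increments, and both non-negativity and monotonicity are inherited by any locally uniform limit.

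For the convergence step I would introduce
\[
  A := \{\omega\in\Omega : (V^n_\cdot(\omega))_{n\in\N}\text{ does not converge locally uniformly in }C([0,\infty);\R)\}
\]
and show that $A\cap\mathrm{DS}$ is time-invariant. The key point is that the Lebesgue stopping times $\sigma^n_k$ depend on $\omega$ only through its range and the order in which dyadic levels are traversed. Concretely, for $\omega\in\mathrm{DS}$ and $\phi\in\mathcal{G}$ one has $\sigma^n_k(\omega\circ\phi)=\phi^{-1}(\sigma^n_k(\omega))$, whence
\[
  V^n_t(\omega\circ\phi) = V^n_{\phi(t)}(\omega), \qquad t\ge 0,
\]
and since such $\phi$ is a homeomorphism of $[0,\infty)$, the two sequences are locally uniformly convergent together. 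By Lemma \ref{lem:timeinvariant}, $A\cap\mathrm{DS}$ is then time-superinvariant, and Theorem \ref{thm:DDS theorem} yields $\overline{P}(A\cap\mathrm{DS})=\mathbb{W}(A\cap\mathrm{DS})\le\mathbb{W}(A)$.

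Under $\mathbb{W}$ the coordinate process is a standard Brownian motion $B$, and classical arguments (strong Markov property, an $L^2$ estimate of the form $\mathbb{E}[(V^n_t-t)^2]\to 0$, and monotonicity upgrading pointwise to uniform convergence on compacts in the spirit of Dini's theorem) give that $V^n_\cdot$ converges locally uniformly almost surely to $\langle B\rangle_t = t$. Hence $\mathbb{W}(A)=0$ and $\overline{P}(A\cap\mathrm{DS})=0$. An analogous time-superinvariance argument applied to $\mathrm{DS}^c$, decomposed into the set of paths constant on some rational subinterval and the set of paths failing to diverge, shows $\overline{P}(\mathrm{DS}^c)=0$, since each piece reduces via Theorem \ref{thm:DDS theorem} to a $\mathbb{W}$-null event about Brownian motion. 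Countable subadditivity of $\overline{P}$ then yields $\overline{P}(A)=0$, giving the asserted convergence for typical paths.

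The main obstacle I expect is the careful verification that $A\cap\mathrm{DS}$ is time-invariant: the identity $\sigma^n_k(\omega\circ\phi)=\phi^{-1}(\sigma^n_k(\omega))$ requires using both nowhere-constancy of $\omega$ (to rule out spurious coincidences at dyadic levels) and strict monotonicity of $\phi\in\mathcal{G}$ (to invert $\phi$ cleanly and to prevent new level-touching events introduced by the reparametrisation). A related technical point is the handling of $\mathrm{DS}^c$, which is not itself time-superinvariant under $\mathcal{G}_0$ and therefore has to be covered by appropriately chosen time-superinvariant subsets, each reducible to a statement about Brownian motion under $\mathbb{W}$.
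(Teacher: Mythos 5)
The paper does not actually prove this proposition; it cites \cite[Lemma~8.1]{Vovk2012}. Your overall strategy---reduce to the Wiener measure via the Dambis--Dubins--Schwarz theorem by showing that the divergence set is time-superinvariant---is the right one and is essentially the standard argument. But there is a genuine error in how you execute it: the claim $\overline{P}(\mathrm{DS}^c)=0$ is false, so the decomposition into $A\cap\mathrm{DS}$ and $\mathrm{DS}^c$ cannot be closed. The constant path $\omega\equiv 0$ lies in $\mathrm{DS}^c$, the Dirac measure at this path is a martingale measure, and Proposition~\ref{prop:properties of outer measure}(ii) then forces $\overline{P}(\{\omega\equiv 0\})=1$ (equivalently: every admissible capital process vanishes identically on the constant path, so no $\lambda<1$ can superhedge its indicator). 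More generally, any eventually constant or convergent path carries a martingale measure, so $\mathrm{DS}^c$ has full outer measure rather than outer measure zero. No covering of $\mathrm{DS}^c$ by time-superinvariant sets can help, since a time-superinvariant set containing the constant path would have Wiener measure... the point is that these sets simply are not $\overline{P}$-null.

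The fix is to drop the detour through $\mathrm{DS}$ entirely, and it is exactly the content of the paper's Lemma~\ref{lem:time-change quadratic variation}: the identity $V^n_t(\omega\circ\phi)=V^n_{\phi(t)}(\omega)$ holds for \emph{every} $\omega\in\Omega$ and \emph{every} $\phi\in\mathcal{G}_0$, not only for strictly increasing unbounded $\phi$ and nowhere constant $\omega$. This is because the Lebesgue stopping times fire only when the path reaches a \emph{new} level of $2^{-n}\Z$, so flat stretches of $\phi$ (or of $\omega$) never create or destroy level-crossing events, and boundedness of $\phi$ is absorbed by the truncation at $\phi(t)$. Consequently, if $(V^n_\cdot(\omega))_n$ converges locally uniformly then so does $(V^n_\cdot(\omega\circ\phi))_n$, which is precisely the contrapositive of time-superinvariance of the divergence set $A$ in the sense of Definition~\ref{def:time-invariant}. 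Theorem~\ref{thm:DDS theorem} then gives $\overline{P}(A)=\mathbb{W}(A)$ directly, and the classical almost sure locally uniform convergence of the Lebesgue-partition quadratic variation of Brownian motion to $t$ finishes the argument. (One small additional caveat: an $L^2$ bound of the form $\E[(V^n_t-t)^2]\to 0$ only yields convergence in probability; for the almost sure statement you need a summable bound plus Borel--Cantelli, or the nested/backward-martingale structure of the Lebesgue partitions, before the Dini-type upgrade to uniformity on compacts.) Your observations that non-negativity and monotonicity pass to the locally uniform limit are correct and unproblematic.
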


Proposition~\ref{prop:quadratic variation} can be found, for instance, in \cite[Lemma~8.1]{Vovk2012} and was generalized to typical paths in the space of c{\`a}dl{\`a}g functions with mildly restricted jumps (see \cite[Theorem~1]{Vovk2015}) and to typical paths in the space of c{\`a}dl{\`a}g functions satisfying a mild restriction on the jumps directed downwards (see \cite[Theorem~3.2]{Lochowski2018}).

\begin{remark}\label{rem:quadratic variation}
  The existence of quadratic variation $\langle S \rangle$ provided in Proposition~\ref{prop:quadratic variation} ensures the existence of quadratic variation in the sense of F{\"o}llmer, see \cite{Follmer1981}. Hence, typical paths can be used as integrators in the purely pathwise  It{\^o} calculus initiated by F{\"o}llmer~\cite{Follmer1981}.
\end{remark}

Similarly to the quadratic variation in probability theory, the existence of quadratic variation of a (typical) path is stable under time-changes. 

\begin{lemma}\label{lem:time-change quadratic variation}
  Let $\phi\colon [0,\infty)\to \R$ be a time-change in $\mathcal{G}_0$ and $\omega \in \Omega$. If the quadratic variation
  \begin{equation*}
    \langle S \rangle_t(\omega) := \lim_{n\to \infty} V^n_t(\omega),\quad t\in [0,\infty),
  \end{equation*}
  exists as a locally uniform limit in $C([0,\infty);\R)$, then 
  \begin{equation*}
    \langle S \rangle_t(\omega\circ \phi) = \lim_{n\to \infty} V^n_t(\omega\circ \phi),\quad t\in [0,\infty),
  \end{equation*}
  exists as a locally uniform limit in $C([0,\infty);\R)$ and 
  \begin{equation*}
    \langle S \rangle_{\phi(t)}(\omega) = \langle S \rangle_t(\omega\circ \phi),
    \quad t\in [0,\infty).
  \end{equation*}
\end{lemma}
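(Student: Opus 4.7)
\medskip

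\noindent\textbf{Proof plan.} The heart of the matter is the pathwise identity
\begin{equation*}
  V^n_t(\omega\circ\phi) \;=\; V^n_{\phi(t)}(\omega), \qquad t\in [0,\infty),\ n\in\N.
\end{equation*}
Once this is established, the lemma follows at once: since $V^n_{\cdot}(\omega)\to \langle S\rangle_{\cdot}(\omega)$ locally uniformly and $\phi$ is continuous (in particular $\phi([0,T])\subset [0,\phi(T)]$ is compact), the composition $V^n_{\phi(\cdot)}(\omega)$ converges to $\langle S\rangle_{\phi(\cdot)}(\omega)$ locally uniformly on $[0,\infty)$, which simultaneously gives the existence of $\langle S\rangle_t(\omega\circ\phi)$ as a locally uniform limit and the claimed identity $\langle S\rangle_t(\omega\circ\phi)=\langle S\rangle_{\phi(t)}(\omega)$.

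To obtain the identity above, the plan is to compare the Lebesgue stopping times of $\omega$ and $\omega\circ\phi$. Write $\tau_k:=\sigma_k^n(\omega)$ and $\tilde\tau_k:=\sigma_k^n(\omega\circ\phi)$. Because $\omega\circ\phi$ hits a dyadic value $v\in 2^{-n}\Z$ at time $s$ precisely when $\omega$ hits $v$ at time $\phi(s)$, and because $\phi$ is continuous and non-decreasing, an induction on $k$ yields
\begin{equation*}
  \tilde\tau_k \;=\; \inf\{s\geq 0 : \phi(s)\geq \tau_k\} \quad\text{and}\quad \phi(\tilde\tau_k)=\tau_k \text{ whenever } \tilde\tau_k<\infty,
\end{equation*}
with the usual convention $\inf\emptyset=\infty$ (this case occurs exactly when $\phi$ is bounded above by some value strictly below $\tau_k$). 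The key step here is that, by definition of $\tau_k$, the path $\omega$ does not take any new dyadic value on $[\tau_{k-1},\tau_k)$, so that neither does $\omega\circ\phi$ before $\phi$ reaches $\tau_k$; continuity of $\phi$ then guarantees $\phi(\tilde\tau_k)=\tau_k$.

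Combining this with the elementary identity $\phi(a\wedge b)=\phi(a)\wedge\phi(b)$, valid for any non-decreasing $\phi$, a case distinction on whether $\tau_k\leq\phi(t)$ or $\tau_k>\phi(t)$ (including $\tilde\tau_k=\infty$) shows in both cases that
\begin{equation*}
  (\omega\circ\phi)(\tilde\tau_k\wedge t) \;=\; \omega(\tau_k\wedge\phi(t)).
\end{equation*}
Summing the squared increments yields the announced identity $V^n_t(\omega\circ\phi)=V^n_{\phi(t)}(\omega)$ and completes the proof. The only delicate point is the careful bookkeeping of the degenerate cases ($\phi$ constant on an interval, or $\phi$ bounded so that some $\tilde\tau_k=\infty$); in both situations the corresponding summands vanish in both expressions, so the identity is preserved.
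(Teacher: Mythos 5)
Your proposal is correct and follows essentially the same route as the paper: establish the pathwise identity $V^n_t(\omega\circ\phi)=V^n_{\phi(t)}(\omega)$ via the correspondence $\phi(\sigma^n_k(\omega\circ\phi))=\sigma^n_k(\omega)$ of the Lebesgue stopping times, then pass to the locally uniform limit using continuity of $\phi$. The only difference is that you spell out the induction on the stopping times and the degenerate cases, which the paper's proof leaves implicit.
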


\begin{proof}
  Keeping in mind the definition of the Lebesgue stopping times~$(\sigma^n_k)$ in~\eqref{eq:Lebesgue stopping times}, we notice that
  \begin{align*}    
    V^n_t(\omega\circ \phi)
    &= \sum_{k=0}^\infty \big(\omega\circ \phi({\sigma^n_{k+1}(\omega\circ \phi) \wedge t})-\omega\circ \phi({\sigma_{k}^n(\omega\circ \phi) \wedge t})\big)^2\\
    &= \sum_{k=0}^\infty \big(\omega({\sigma^n_{k+1}(\omega) \wedge\phi(t)})-\omega({\sigma_{k}^n(\omega) \wedge \phi(t)})\big)^2
    = V^n_{\phi(t)}(\omega),
  \end{align*}
  for all $t\in [0,\infty)$ and every $n\in \N$. Hence, $(V^n_{\cdot}(\omega))_{n\in \N}$ converges locally uniformly if and only if $(V^n_{\cdot}(\omega\circ \phi))_{n\in \N}$ converges locally uniformly, and for $t\in [0,\infty)$ we get
  \begin{align*}
    \langle S \rangle_t(\omega\circ \phi)
    =\lim_{n\to \infty} V^n_t(\omega\circ \phi)
    = \lim_{n\to \infty} V^n_{\phi(t)}(\omega)
    = \langle S \rangle_{\phi(t)}(\omega).
  \end{align*}
\end{proof}

\subsection{Game-theoretic integration}

Based on the existence of quadratic variation for typical paths, one can derive It{\^o}'s isometry type estimates w.r.t. the outer measure~$\overline{P}$, see \cite[Lemma~3.4]{Perkowski2016} or \cite[Lemma~4.5 or~4.8]{Lochowski2018}. While this allows to develop a comprehensive integration theory for typical paths (see \cite{Perkowski2016,Vovk2016,Lochowski2018} and \cite[Chapter~14]{Shafer2019}), we shall review here only the essential basics to treat differential equations driven by typical paths. To that end, we need to introduce some concepts concerning processes. 

\medskip

A process $X \colon \Omega \times [0,\infty) \to \mathbb{R}$ is called \textit{adapted} if the game-theoretic variable~$ X_{t}$ is $\mathcal{F}_{t}$-measurable for all $t \in [0,\infty)$. The process $X$ is said to be \textit{continuous} if the sample path $t \mapsto X_t(\omega)$ is continuous for typical paths~$\omega \in \Omega$. In order to work with ``game-theoretic'' processes, we recall the outer expectation~$\overline{E}$ associated to~$\overline{P}$. For a non-negative game-theoretic variable~$F$ we define 
\begin{align*}
  \overline{E}[F] := \inf\bigg\{\lambda \geq 0\,:\,  \exists \, \mathcal{C}\in \mathcal{V}_\lambda \text{ s.t. }\forall \omega \in \Omega  \text{ } \lambda + \liminf_{t\to \infty}\mathcal{C}_t(\omega) \ge F(\omega) \, \bigg\}.
\end{align*}
Given two processes $X,Y\colon \Omega \times [0,\infty)\to \R$ we introduce
\begin{equation*}
  \|X(\omega)-Y(\omega)\|_{\infty;[0,T]}:=\sup_{t\in [0,T]}\|X_t(\omega)-Y_t(\omega)\|,\quad \omega\in \Omega, 
\end{equation*}
for $T>0$. Now we identify two processes $X,Y$ if 
\begin{equation*}
  d_{T}(X,Y) := \overline E\big[\|X-Y\|_{\infty;[0,T]} \wedge 1\big]=0\quad \text{for all}\quad T\in [0,\infty).
\end{equation*}
The resulting space of equivalent classes of processes is denoted by $\overline L_{\text{loc}}^0([0,\infty);\R)$.

\medskip

In order to construct a game-theoretic It{\^o} integration, we start to define the integral of step functions and then extend the construction to a more general class of integrands. A process $F\colon \Omega \times [0,\infty) \rightarrow \R^d$ is called a \emph{step function} if there exist stopping times $0 = \tau_0 \leq \tau_1 \leq \dots$, and $\F_{\tau_n}$-measurable functions $F_n \colon \Omega \rightarrow \R^d$, such that for every $\omega \in \Omega$ we have $\tau_n(\omega) = \infty$ for all 
but finitely many $n$, and such that
\begin{equation*}
  F_t(\omega) = \sum_{n=0}^\infty F_n(\omega) \1_{[\tau_n(\omega),\tau_{n+1}(\omega))}(t),\quad (\omega,t)\in \Omega \times [0,\infty).
\end{equation*}
The corresponding integral of $F$ w.r.t. $(S_t)_{t\in [0,\infty)}$ is given by
\begin{equation*}
  (F\cdot S)_t := \sum_{n=0}^\infty F_{\tau_n} \big(S_{\tau_{n+1} \wedge t} - S_{\tau_n \wedge t}\big), \quad t \in [0,\infty),
\end{equation*}
which is well-defined for every $\omega \in \Omega$.

\medskip

The following lemma (Lemma~\ref{lem:integral}) and the next corollary (Corollary~\ref{cor:convergence of integrals}) are direct consequences of \cite[Theorem~3.5 and Corollary~3.6]{Perkowski2016}.

\begin{lemma}[Model-free It{\^o} integration]\label{lem:integral}
  Let $X\colon\Omega \times[0,\infty)\to \R$ be an adapted and continuous process. Then, there exists a process $\int X \dd S \in \overline L_{\text{loc}}^0([0,\infty);\R)$ with the following continuity property: for every $T>0$, if $(X^{(n)})_{n \in \N}$ is a sequence of simple functions and $(c_n)_{n\in \N}\subset\R$ is a sequence of real numbers such that $\lVert X^{(n)}(\omega) - X(\omega)\rVert_{\infty;[0,T]} \le c_n $ for all $\omega \in \Omega$ and all $n \in \N$, then for typical paths $\omega \in \Omega$ there exists a constant $C(\omega) > 0$ such that
  \begin{equation*}
    \Big\lVert (X^{(n)} \cdot S)(\omega) - \int X \dd S(\omega) \Big\rVert_{\infty;[0,T]} \le C(\omega) c_n \sqrt{\log n}
  \end{equation*}
  for all $n \in \mathbb{N}$.
  
  The integral process $\int X \dd S$ is continuous for typical paths, and there exists a representative $\int X \dd S$ which is adapted, although it may take the values $\pm \infty$. We usually write $\int_0^t X_s \dd S_s := \int X \dd S(t)$, and we call $\int X \dd S$ the \textup{model-free It{\^o} integral of $X$ w.r.t.~$S$}. 
\end{lemma}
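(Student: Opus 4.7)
The plan is to construct $\int X \dd S$ as the pathwise limit of capital processes generated by step-function approximations to $X$, controlling the rate of convergence by a pathwise exponential It\^o--type isometry for simple integrands combined with a Borel--Cantelli argument for $\overline P$. First I fix $T>0$ and localize by the stopping times $\tau_M := \inf\{t \ge 0 : \langle S \rangle_t \ge M\}$; on $\{t \le \tau_M\}$ the quadratic variation is bounded by $M$, and since $\langle S \rangle_T < \infty$ for typical $\omega$ by Proposition~\ref{prop:quadratic variation}, it suffices to establish the assertion on each event $\{T \le \tau_M\}$ and then glue across $M \to \infty$ using countable subadditivity. A convenient canonical approximation is built from $X$ itself by the stopping times $\sigma^n_0(\omega) := 0$ and $\sigma^n_{k+1}(\omega) := \inf\{t > \sigma^n_k(\omega) : |X_t(\omega) - X_{\sigma^n_k}(\omega)| \ge 2^{-n}\}$, setting $X^{(n)}_t := X_{\sigma^n_k}$ on $[\sigma^n_k, \sigma^n_{k+1})$; continuity of $X$ forces $\|X^{(n)} - X\|_{\infty;[0,T]} \le 2^{-n}$ pathwise, so existence of the integral is reduced to the stability assertion of the lemma.

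The central input is the exponential model-free It\^o isometry of \cite[Lemma~3.4]{Perkowski2016}, which takes the schematic form
\begin{equation*}
  \overline P\Big( \big\{\sup_{t\le T}|(H\cdot S)_t| > \lambda\big\} \cap \{\langle S \rangle_T \le M\} \Big) \le 2\exp\Big(-\frac{\lambda^2}{2 c^2 M}\Big)
\end{equation*}
for any simple strategy $H$ with $\|H\|_\infty \le c$, obtained pathwise by super-hedging with the exponential supermartingale $\exp\bigl(\pm \theta (H\cdot S)_t - \tfrac{1}{2}\theta^2 (H^2 \cdot \langle S \rangle)_t\bigr)$ and optimizing in $\theta$. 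Given any admissible approximating sequence $(X^{(n)})$ with $\|X^{(n)}-X\|_{\infty;[0,T]} \le c_n$, I apply this inequality to the common refinement of $X^{(n)}$ and $X^{(m)}$ to bound the tail of $\sup_{t\le T \wedge \tau_M} |((X^{(n)}-X^{(m)})\cdot S)_t|$ with parameter $c_n+c_m$. Setting $\lambda_n = K c_n \sqrt{\log n}$ with $K$ sufficiently large makes the resulting tail probabilities summable in $n$; countable subadditivity of $\overline P$ together with a Borel--Cantelli argument then produces, for typical $\omega$, a constant $C(\omega)$ such that
\begin{equation*}
  \big\|(X^{(n)}\cdot S)(\omega) - {\textstyle\int} X \dd S(\omega)\big\|_{\infty;[0,T\wedge \tau_M(\omega)]} \le C(\omega)\, c_n \sqrt{\log n} \quad \text{for every } n.
\end{equation*}
This simultaneously yields the Cauchy property of $(X^{(n)}\cdot S)(\omega)$ on $[0,T\wedge \tau_M(\omega)]$, defines the limit $\int X \dd S$, and gives the stated stability for every admissible approximating sequence.

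Continuity of the limit for typical $\omega$ is inherited from the uniform convergence on compacts of the continuous capital processes $(X^{(n)}\cdot S)$, and gluing along $M \to \infty$ together with exhausting $T$ through integers delivers $\int X \dd S \in \overline L^0_{\text{loc}}([0,\infty);\R)$ via a countable union of null sets. To produce an \emph{adapted} representative I fix the canonical approximation above and define $(\int X \dd S)_t(\omega) := \limsup_n (X^{(n)}\cdot S)_t(\omega)$ where finite and $\pm\infty$ otherwise; since each $(X^{(n)}\cdot S)_t$ is $\F_t$-measurable, so are the $\limsup$ and the set where it is finite, as required. The main technical obstacle is the exponential isometry itself: every tool from the stochastic theory of martingale maximal inequalities must be replaced by an explicit pathwise super-hedging portfolio valid for \emph{every} $\omega\in\Omega$, and it is this super-hedging construction, rather than the subsequent Borel--Cantelli step, that carries the real weight of the proof.
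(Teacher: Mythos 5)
Your sketch reconstructs exactly the argument the paper relies on: the paper offers no proof of this lemma but imports it as a direct consequence of \cite[Theorem~3.5 and Corollary~3.6]{Perkowski2016}, whose proof is precisely the combination you describe --- the pathwise exponential super-hedging (It\^o isometry type) inequality of \cite[Lemma~3.4]{Perkowski2016}, applied to differences of step-function approximations with thresholds $\lambda_n \sim c_n\sqrt{\log n}$, followed by a Borel--Cantelli argument via countable subadditivity of $\overline{P}$ and a localization in $\langle S\rangle$. The one caveat is that the step carrying the real weight, the explicit super-hedging portfolio behind the exponential inequality, is invoked rather than constructed, but you flag this honestly and it is the same division of labour the paper itself adopts.
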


One of fundamental properties of It{\^o} integrals in probability theory is that they can be approximated by left-point Riemann sums if one considers sufficiently regular integrands and a sufficiently strong concept of convergence. An example of this property is formulated in the next corollary for the model-free It{\^o} integral.

\begin{corollary}\label{cor:convergence of integrals}
  Suppose we are in the setting of Lemma~\ref{lem:integral}. If $c_n = o((\log n)^{-1/2})$, then for typical paths $((X^{(n)}_{\cdot} \cdot S))_{n\in \N}$ converges locally uniformly to $\int X \dd S$.
\end{corollary}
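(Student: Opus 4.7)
The plan is to invoke Lemma~\ref{lem:integral} directly and to observe that the decay condition $c_n = o((\log n)^{-1/2})$ exactly cancels the $\sqrt{\log n}$ factor appearing in the quantitative estimate supplied by that lemma. First I would fix $T>0$ and apply Lemma~\ref{lem:integral}: this yields, for typical paths $\omega\in\Omega$, a (path- and $T$-dependent) constant $C(\omega)=C(\omega,T)>0$ with
\[
  \Bigl\| (X^{(n)} \cdot S)(\omega) - \int X \dd S(\omega) \Bigr\|_{\infty;[0,T]} \leq C(\omega)\, c_n \sqrt{\log n}
  \quad \text{for all } n\in\N.
\]
The hypothesis $c_n = o((\log n)^{-1/2})$ forces $c_n\sqrt{\log n}\to 0$ as $n\to\infty$, so the right-hand side tends to zero. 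Hence, outside a null set $N_T \subset \Omega$, the sequence $(X^{(n)} \cdot S)(\omega)$ converges uniformly to $\int X \dd S(\omega)$ on $[0,T]$.

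To upgrade this to locally uniform convergence on the whole half-line, I would take a countable exhausting sequence $T_k \uparrow \infty$ (for instance $T_k = k$) and form the union $N := \bigcup_{k\in \N} N_{T_k}$. By countable sub-additivity of $\overline P$ (Proposition~\ref{prop:properties of outer measure}(i)), the set $N$ is again a null set. For every $\omega\notin N$ and every $T>0$, one picks $k$ with $T_k\geq T$; uniform convergence on $[0,T_k]$ then implies uniform convergence on $[0,T]$, which is exactly locally uniform convergence for typical paths.

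I do not anticipate a serious obstacle here: the corollary is essentially a direct translation of the explicit $\sqrt{\log n}$ rate in Lemma~\ref{lem:integral}. The only minor subtlety is the $T$-dependence of the exceptional set, which is dispatched by the countable sub-additivity argument above. The genuinely technical input, namely the quantitative estimate itself, is black-boxed from Lemma~\ref{lem:integral} (whose proof is attributed to \cite[Theorem~3.5 and Corollary~3.6]{Perkowski2016}), so no new estimate needs to be produced.
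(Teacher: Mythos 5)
Your proposal is correct and matches the paper's treatment: the paper presents this corollary as a direct consequence of Lemma~\ref{lem:integral} (citing \cite[Theorem~3.5 and Corollary~3.6]{Perkowski2016}) without writing out the argument, and the argument you supply---cancelling the $\sqrt{\log n}$ factor against $c_n = o((\log n)^{-1/2})$ for each fixed $T$, then removing the $T$-dependence of the exceptional set via countable sub-additivity of $\overline P$ along an exhausting sequence $T_k\uparrow\infty$---is exactly the intended one.
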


A more general integration theory for typical paths was developed in \cite{Perkowski2016}, \cite{Vovk2016} and \cite{Lochowski2018} providing, e.g., more sophisticated continuity estimates for the model-free It{\^o} integral and integration for not necessarily continuous integrands, and not necessarily continuous typical paths as integrators. 

\subsection{It{\^o}'s formula}

It is known that typical paths are as irregular as the sample paths of martingales. More precisely, typical paths have finite $p$-variation only for $p>2$, see \cite[Theorem~1]{Vovk2008}. Therefore, the model-free It{\^o} integral from Lemma~\ref{lem:integral} cannot satisfy the fundamental theorem of calculus but it does satisfy an It{\^o} type formula, as we shall show. 

\medskip

Let $A\colon \Omega \times [0,\infty)\to \R$ and $B\colon \Omega \times [0,\infty)\to \R$ be adapted and continuous processes. We consider the integral process $Y\colon \Omega \times [0,\infty)\to \R$ given by 
\begin{equation} \label{procY}
  Y_t :=  \int_0^t A_u \dd S_u +\int_0^t B_u \dd u,\quad t\in [0,\infty),
\end{equation}  
where the first integral denotes the model-free It{\^o} integral, as defined in Lemma~\ref{lem:integral}, and the second integral a classical Riemann--Stieltjes integral. For this type of integral processes we can derive the following It{\^o} type formula. 

\begin{proposition}\label{prop:Ito formula}
  If $(Y_t)_{t\in [0,\infty)}$ has the representation~\eqref{procY} and $f\colon\R \to \R$ is a twice continuously differentiable function, then the It{\^o} type formula 
  \begin{equation}\label{eq:Ito formula}
    f(Y_t) = f(Y_0)+ \int_0^t f^\prime (Y_u) B_u \dd u+ \int_0^t f^\prime (Y_u) A_u \dd S_u + \frac{1}{2} \int_0^t f^{\prime\prime}(Y_u) A_u^2 \dd \langle S\rangle_u, \quad 
  \end{equation}
  for $t\in [0,\infty)$, holds for typical paths.
\end{proposition}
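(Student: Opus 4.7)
The plan is to prove~\eqref{eq:Ito formula} by the F{\"o}llmer-type pathwise argument, adapted to the outer-measure framework of this paper. First I localize: for $N \in \N$ define $\tau_N := \inf\{t \geq 0 : |Y_t| \vee |A_t| \vee |B_t| \geq N\}$; since $Y$, $A$, $B$ are continuous for typical paths, one has $\tau_N \to \infty$ typically, so it suffices to establish~\eqref{eq:Ito formula} on $[0, T \wedge \tau_N]$ for arbitrary fixed $T, N > 0$, where $Y$, $A$, $B$ are bounded and $f, f', f''$ may be treated as bounded and uniformly continuous on the range of $Y$. Next I partition $[0,t]$ by the Lebesgue stopping times $(\sigma^n_k)$ of~\eqref{eq:Lebesgue stopping times} associated to $S$, along which $V^n \to \langle S\rangle$ locally uniformly by Proposition~\ref{prop:quadratic variation}, and whose mesh tends to $0$ by uniform continuity of $S$ on compacts. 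Writing $\Delta_k Y := Y_{\sigma^n_{k+1} \wedge t} - Y_{\sigma^n_k \wedge t}$ and analogously $\Delta_k S$, $\Delta_k t$, the telescoping identity together with a second-order Taylor expansion of $f$ gives
\begin{equation*}
  f(Y_t) - f(Y_0) = \sum_k f'(Y_{\sigma^n_k \wedge t}) \Delta_k Y + \tfrac12 \sum_k f''(Y_{\sigma^n_k \wedge t}) (\Delta_k Y)^2 + \sum_k R^n_k,
\end{equation*}
with $|R^n_k| \leq \tfrac12 \rho(\max_k |\Delta_k Y|)(\Delta_k Y)^2$ for a continuity modulus $\rho$ of $f''$ on the range of $Y$.

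I then substitute $\Delta_k Y = A_{\sigma^n_k \wedge t}\Delta_k S + B_{\sigma^n_k \wedge t}\Delta_k t + \varepsilon^n_k$, where $\varepsilon^n_k$ collects the increments of $\int (A_u - A_{\sigma^n_k \wedge t})\dd S_u$ and $\int (B_u - B_{\sigma^n_k \wedge t})\dd u$ over $[\sigma^n_k \wedge t, \sigma^n_{k+1}\wedge t]$. The three leading contributions converge to the desired terms: the drift sum $\sum_k f'(Y_{\sigma^n_k \wedge t})B_{\sigma^n_k \wedge t}\Delta_k t \to \int_0^t f'(Y_u)B_u\dd u$ as a standard Riemann sum, since $f'(Y)B$ is continuous; the It{\^o}-type sum $\sum_k f'(Y_{\sigma^n_k \wedge t})A_{\sigma^n_k \wedge t}\Delta_k S \to \int_0^t f'(Y_u)A_u\dd S_u$ by applying Corollary~\ref{cor:convergence of integrals} to the uniformly approximating step-function integrand; and the quadratic sum $\tfrac12 \sum_k f''(Y_{\sigma^n_k \wedge t})A^2_{\sigma^n_k \wedge t}(\Delta_k S)^2 \to \tfrac12 \int_0^t f''(Y_u)A_u^2\dd\langle S\rangle_u$, which follows from the locally uniform convergence $V^n \to \langle S\rangle$ and the uniform continuity of $f''(Y)A^2$ by a Helly-type passage to the limit in Stieltjes integrals. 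The cross contributions $AB\,\Delta_k S\,\Delta_k t$ and $B^2(\Delta_k t)^2$ are negligible since $\max_k|\Delta_k S| + \max_k \Delta_k t \to 0$, and the Taylor remainder vanishes as $\sum_k |R^n_k| \leq \tfrac12 \rho(\max_k |\Delta_k Y|) \sum_k (\Delta_k Y)^2$ with the first factor tending to $0$ by uniform continuity of $Y$ and the second factor remaining bounded.

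The main technical obstacle is the control of the error terms $\varepsilon^n_k$, most notably the squared cross terms $\sum_k f''(Y_{\sigma^n_k \wedge t}) A_{\sigma^n_k \wedge t}\Delta_k S\cdot \varepsilon^n_k$ and $\sum_k f''(Y_{\sigma^n_k \wedge t})(\varepsilon^n_k)^2$ arising when one expands $(\Delta_k Y)^2$ using the decomposition of $\Delta_k Y$. I would handle these by invoking the model-free It{\^o} isometry underlying Lemma~\ref{lem:integral} (\cite[Lemma~3.4]{Perkowski2016} and \cite[Lemma~4.5]{Lochowski2018}), applied to the piecewise-constant approximations $(A_u - A_{\sigma^n_k \wedge t})\1_{[\sigma^n_k \wedge t,\sigma^n_{k+1}\wedge t)}(u)$, whose supremum norm vanishes as $n \to \infty$ by uniform continuity of $A$ on the localized interval. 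This pushes all $\varepsilon^n_k$-contributions into $\overline{P}$-null sets. Undoing the localization by letting $N \to \infty$ yields~\eqref{eq:Ito formula} for typical paths.
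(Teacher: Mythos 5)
Your proposal takes a genuinely different route from the paper: you run the F\"ollmer--Taylor expansion directly on $Y$ along the Lebesgue partition of $S$ and try to absorb the errors coming from the non-constant integrands via the model-free It\^o isometry. The paper instead first replaces $A$ and $B$ by step functions $A^{(n)},B^{(n)}$ along stopping times adapted to the oscillation of $A$ and $B$ (not of $S$), applies F\"ollmer's pathwise formula to the resulting process $Y^{(n)}$ --- whose quadratic variation $\int_0^\cdot (A^{(n)}_u)^2\dd\langle S\rangle_u$ is computable precisely because the integrand is piecewise constant --- and then lets $n\to\infty$ in the resulting identity, using only the continuity of the three integrals in their integrands. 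As written, your direct route has two genuine gaps.

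First, the assertion that the time-mesh of the Lebesgue partition $(\sigma^n_k)$ tends to zero ``by uniform continuity of $S$'' is false: the $\sigma^n_k$ are level-crossing times, so if $S$ stays inside a band of width less than $2^{-n}$ around a non-lattice value for a macroscopic time interval (in particular if it is constant there, which typical paths in $\Omega$, as opposed to $\mathrm{DS}$, may be on a set of positive outer measure), then no $\sigma^n_k$ falls in that interval and $\max_k(\sigma^n_{k+1}\wedge t-\sigma^n_k\wedge t)$ does not vanish. Several of your steps lean on this: the Riemann-sum convergence of the drift term, the negligibility of the $\Delta_k t$ cross terms, the vanishing of $\rho(\max_k|\Delta_k Y|)$ (since $\Delta_k Y$ contains $\int B\dd u$ over a possibly long interval), and the smallness of $\sup_u|A_u-A_{\sigma^n_k\wedge t}|$ on each partition interval. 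Repairing this by refining the partition with deterministic times forces you to re-establish $V^n\to\langle S\rangle$ along the refined partitions, which Proposition~\ref{prop:quadratic variation} does not give you. Second, the It\^o isometry does not control $\sum_k(\varepsilon^n_k)^2$ or $\sum_k|\Delta_k S||\varepsilon^n_k|$: it bounds the uniform norm of the running integral $\int(A_u-A_{\kappa_n(u)})\dd S_u$, hence $\max_k|\varepsilon^n_k|$, but the number of partition intervals on $[0,t]$ grows like $2^{2n}\langle S\rangle_t$ and $\sum_k|\varepsilon^n_k|$ has no a priori bound; estimating $\sum_k(\varepsilon^n_k)^2$ amounts to controlling the quadratic variation of the error integral along the partition, i.e.\ exactly the kind of statement the proof is supposed to deliver. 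This circularity is what the paper's detour through the step-function processes $Y^{(n)}$ is designed to break, and your sketch does not break it.
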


Note, since $\langle S\rangle (\omega)$ exists and is a non-decreasing and continuous function for typical paths $\omega \in \Omega$, the integral $\int_0^t f^{\prime\prime}(Y_u) A_u^2 \dd \langle S\rangle_u$ in \eqref{eq:Ito formula} can be defined as a Riemann--Stieltjes integral. 

\begin{proof}[Proof of Proposition~\ref{prop:Ito formula}]
  Since $A\colon \Omega \times [0,\infty)\to \R$ and $B\colon \Omega \times [0,\infty)\to \R$ are adapted and continuous processes, we can locally uniformly approximate them by 
  \begin{equation*}
     A^{(n)}_t := \sum_{k=1}^\infty A_{\rho^n_{k-1}}\1_{[\rho^n_{k-1},\rho^n_{k})}(t)\quad\text{and}\quad
     B^{(n)}_t := \sum_{k=1}^\infty B_{\rho^n_{k-1}}\1_{[\rho^n_{k-1},\rho^n_{k})}(t),
  \end{equation*}
  for $n\in \N$, respectively, where 
  \begin{align*}
    \rho_0^n:=0 \quad \text{and}\quad
    \rho_k^n:= \inf \big\{t\geq \rho_{k-1}^n\,:\, |A_{t}-A_{\rho_{k-1}^{n}}|  \geq 2^{-n} \text{ or } |B_{t}-B_{\rho_{k-1}^{n}}|  \geq 2^{-n} \big\},
  \end{align*}
  for $k\in \N$. The corresponding approximation $(Y^{(n)}_{\cdot})_{n\in \N}$ of $(Y_t)_{t\in [0,\infty)}$ is defined by
  \begin{equation*}
    Y_t^{(n)} :=  \int_0^t A_u^{(n)} \dd S_u +\int_0^t B_u^{(n)} \dd u,\quad t\in [0,\infty).
  \end{equation*} 
  By the continuity of the model-free It{\^o} integration (use e.g. Corollary~\ref{cor:convergence of integrals} with $c_n:=2^{-n}$), we have
  \begin{equation*}
    \lim_{n\to \infty}\sup_{t\in [0,T]} \bigg\| \int_0^t A_u^{(n)} \dd S_u -\int_0^t A_u \dd S_u\bigg \|=0
  \end{equation*}
  for every $T>0$, for typical paths. Furthermore, by the continuity of Riemann--Stieltjes integration (see e.g. \cite[Proposition~2.7]{Friz2010}), we know that 
  \begin{equation*}
    \lim_{n\to \infty}\sup_{t\in [0,T]} \bigg\| \int_0^t B_u^{(n)}(\omega) \dd u -\int_0^t B_u(\omega) \dd u\bigg \|=0
  \end{equation*}
  for every $T>0$ and all $\omega\in \Omega$. Hence, $(Y^{(n)}_{\cdot})_{n\in\N}$ converges locally uniform to $(Y_t)_{t\in [0,\infty)}$ for typical paths. 
  
  We also notice that, for typical paths $\omega\in \Omega$, by the definition of quadratic variation, by the Cauchy--Schwarz inequality and Proposition~\ref{prop:quadratic variation}, the quadratic variation of $(Y^{(n)}_t)_{t\in [0,\infty)}$ exists and is given by 
  \begin{align*}
    \langle Y^{(n)} \rangle_t (\omega)
    &:= \lim_{n\to \infty }\sum_{k=0}^\infty \big(Y^{(n)}_{\sigma^n_{k+1}(\omega)\wedge t}(\omega)-Y^{(n)}_{\sigma_{k}^n(\omega)\wedge t}(\omega)\big)^2\\
    &= \lim_{n\to \infty }\sum_{k=0}^\infty \bigg(\int_0^{{\sigma^n_{k+1}\wedge t}} A_u^{(n)} \dd S_u (\omega) - \int_0^{{\sigma^n_{k}\wedge t}} A_u^{(n)} \dd S_u (\omega)\bigg)^2 \\
    &= \int_0^t (A_u^{(n)}(\omega))^2 \dd \langle S\rangle_u(\omega),\quad t\in [0,\infty),
  \end{align*}
  where the convergence takes place locally uniformly and $(\sigma^n_{k})_{k\in \N}$ denotes again the Lebesgue stopping times as defined in~\eqref{eq:Lebesgue stopping times}. Using Remark~\ref{rem:quadratic variation} and F{\"o}llmer's pathwise It{\^o} formula (\cite[TH{\'E}OR{\`E}ME]{Follmer1981}), we observe that 
  \begin{align}\label{eq:Ito discrete}
    \begin{split}
    &f(Y^{(n)}_t)- f(Y^{(n)}_0) \\
    &\quad= \int_0^t f^\prime (Y^{(n)}_u)\dd Y^{(n)}_u 
    + \frac{1}{2} \int_{0}^{t} f^{\prime\prime}(Y_u^{(n)}) \dd \langle Y^{(n)}\rangle_u\\
    &\quad= \int_0^t  f^\prime (Y_u^{(n)}) B^{(n)}_u \dd u 
    + \int_0^t f^\prime (Y_u^{(n)}) A^{(n)}_{u} \dd S_u 
    + \frac{1}{2} \int_{0}^{t} f^{\prime\prime}(Y^{(n)}_u) \big(A^{(n)}_{t}\big)^2 \dd \langle S\rangle_u.
    \end{split}
  \end{align}
  where we used the definition of $(Y^{(n)}_t)_{t\in [0,\infty)}$ and the previous identity for $(\langle Y^{(n)} \rangle_t)_{t\in [0,\infty)}$ in the last line. Since $(Y^{(n)}_{\cdot})_{n\in \N}$ converges locally uniformly to $(Y_t)_{t\in [0,\infty)}$ for typical paths, we can conclude the following as $n\to \infty$:
  \begin{itemize}
    \item $f(Y^{(n)}_\cdot)$, $f^\prime(Y^{(n)}_{\cdot})$, $f^{\prime\prime}(Y^{(n)}_{\cdot})$ converge locally uniformly to $f(Y_\cdot)$, $f^\prime(Y_{\cdot})$, $f^{\prime\prime}(Y_{\cdot})$, respectively, since $f$ is twice continuously differentiable;
    \item $\int_0^{\cdot}  f^\prime (Y_u^{(n)}) B^{(n)}_u \dd u$ and $\int_{0}^{\cdot} f^{\prime\prime}(Y^{(n)}_u) \big(A^{(n)}_{t}\big)^2 \dd \langle S\rangle_u$ converge locally uniformly to the limits $\int_0^{\cdot}  f^\prime (Y_u) B_u \dd u$ and $\int_{0}^{\cdot} f^{\prime\prime}(Y_u) \big(A_{t}\big)^2 \dd \langle S\rangle_u$, respectively, by the continuity of Riemann--Stieltjes integration;
    \item $\int_0^{\cdot} f^\prime (Y_u^{(n)}) A^{(n)}_{u} \dd S_u$ converges locally uniformly to $\int_0^{\cdot} f^\prime (Y_u) A_{u} \dd S_u$ by the continuity of the model-free It{\^o} integration.
  \end{itemize}
  Due to these observations about the convergence behavior, the identify~\eqref{eq:Ito discrete} reveals the assertion by sending $n\to \infty$.
\end{proof}

\section{Game-theoretic differential equations}\label{sec:SDE}

One of the main motivations to develop classical stochastic It{\^o} integration was to set up a well-posedness theory for stochastic differential equations. In a related manner, game-theoretic integration can be used to treat differential equations driven by typical paths, cf. \cite{Bartl2019} and \cite{Galane2018}.

\begin{remark}
  The work~\cite{Bartl2019b} provides existence and uniqueness results for differential equations on a finite time horizon~$[0,T]$ driven by typical paths in a Hilbert space setting (the typical paths attain their values in some Hilbert space), assuming that the coefficients are Lipschitz continuous. This approach relies on an extended path space. As a result one obtains a smaller outer measure than the outer measure defined in Definition~\ref{def:Vovk's outer measure}. By the extension of the path space we mean that together with the coordinate process $(S_t)_{t\in [0,T]}$ the investor is allowed to buy or sell assets, whose prices at the moment $t\in [0,T]$ are equal to $\Vert S\Vert^{2}_t-\langle S\rangle_t$, where here $\Vert \cdot \Vert$ denotes the Hilbert space norm and $(\langle S\rangle_t)_{t\in [0,T]}$ denotes the quadratic variation process of the coordinate process $(S_t)_{t\in [0,T]}$ but defined in a different way than the usual tensor quadratic variation of a Hilbert space-valued semi-martingale, see \cite[Remark~2.7]{Bartl2019b}. Additionally, the measure $\d \langle S\rangle$ is supposed to be absolutely continuous with respect to the Lebesgue measure $\d t$ and the density $\d \langle S\rangle/ \d t$ is supposed  to be globally bounded). 
 
  The work~\cite{Galane2018} obtains existence and uniqueness results for multi-dimensional differential equations driven by typical paths on a finite time horizon~$[0,T]$ under Lipschitz assumptions. In order to obtain a Burkholder--Davis--Gundy type inequality, \cite{Galane2018} is based on a modified outer expectation which may be interpreted as the super-hedging cost of not only the terminal value of some process~$(Z_t)_{t\in [0,T]}$, i.e. $Z_{T}$, but of the value $Z_{\tau}$ for any stopping time~$\tau$ such that $\tau\in[0,T]$. As a result one obtains a possibly greater outer measure than the outer measure defined in Definition~\ref{def:Vovk's outer measure}.
\end{remark}

While the case of multi-dimensional differential equations driven by typical paths was already studied, we focus here on \textit{one-dimensional} differential equations. The one-dimensional case is in various ways special and allows to obtain results which do not hold in general in a multi-dimensional setting. A famous example is the Yamada--Watanabe theorem providing the existence and uniqueness of a solution for differential equations with non-Lipschitz diffusion coefficients, see \cite{Yamada1971}. For examples and a more comprehensive discussion about the different necessary regularity assumptions on the coefficients in a one- and multi-dimensional setting, respectively, we refer to~\cite[Remark~2 and~3]{Watanabe1971}.

\medskip

In this section we consider one-dimensional differential equations driven by typical paths of the form
\begin{equation}\label{eq:SDE}
  X_t = x_0 + \int_0^t b(X_u)\dd \langle S\rangle_u + \int_0^t \sigma(X_u)\dd S_u,\quad t\in [0,\infty),
\end{equation}
where $x_0\in \R$, $b\colon\R\to \R $ and $\sigma\colon\R\to \R$ are continuous functions and $X\colon \Omega \times [0,\infty)\to\R$ is supposed to be an adapted and continuous process. Thanks to Lemma~\ref{lem:integral}, the model-free It{\^o} integral $\int_0^t \sigma(X_u)\dd S_u$ exists in $L^0_{\textup{loc}}([0,\infty);\R)$.

\begin{definition}
  Let $\tilde \Omega \subset \Omega$ be a set.
  \begin{enumerate}
    \item[(i)] We say that $X=(X_t)_{t\in [0,\infty)}$ is a \emph{solution} to \eqref{eq:SDE} in $\tilde \Omega$ if $X\colon \Omega\times [0,\infty)\to \time \to \R$ is an adapted and continuous process and \eqref{eq:SDE} holds for typical paths $\omega\in\tilde \Omega$. For $\Omega = \tilde \Omega$ we usually omit ``in $\Omega$'' and just call $(X_t)_{t\in [0,\infty)}$ a solution to~\eqref{eq:SDE}.
    \item[(ii)] We say that $X=(X_t)_{t\in [0,\infty)}$ is the \emph{unique solution} to \eqref{eq:SDE} in $\tilde{\Omega} $ if $(X_t)_{t\in [0,\infty)}$ is a solution to \eqref{eq:SDE} in $\tilde \Omega$ and, for every solution $Y=(Y_t)_{t\in [0,\infty)}$ in $\tilde \Omega$ we have $\|X(\omega)-Y(\omega)\|_{[0,T],\infty}=0$ for all $T\in [0,\infty)$, for typical paths $\omega\in \tilde{\Omega}$. For $\Omega = \tilde \Omega$ we usually omit ``in $\Omega$'' and just call $(X_t)_{t\in [0,\infty)}$ the unique solution to~\eqref{eq:SDE}.
  \end{enumerate}
\end{definition}

We shall study the differential equation~\eqref{eq:SDE} assuming the following regularity assumptions on the coefficients.

\begin{assumption}\label{ass:regularity}
  Suppose that $b\colon \R\to \R $ and $\sigma\colon \R\to \R$ are continuous and bounded functions satisfying the conditions 
  \begin{align*}
    |b(x)-b(y)|\leq C_b|x-y|\quad \text{and}\quad
    |\sigma (x)-\sigma(y)|\leq C_{\sigma}|x-y|^{1/2},
  \end{align*}
  for all $x,y\in \R$, where $C_b$ and $C_{\sigma}$ are positive constants.
\end{assumption}

\begin{example}\label{ex:CIR model}
  In mathematical finance the Cox--Ingersoll--Ross (CIR) process serves as a frequently applied model for the evolution of interest rates or volatility on financial market. The CIR process $(r_t)_{t\in [0,\infty)}$ can be described by the stochastic differential equation
  \begin{equation*}
    \d r_t = a (b-r_t)\dd t+ \sigma  \sqrt{r_t}\dd W_t,\quad t\in [0,\infty),
  \end{equation*}
  where $(W_t)_{t\in [0,\infty)}$ denotes a standard Wiener process and $a,b,\sigma$ are constants. 
  
  While the diffusion coefficient $x\mapsto \sigma \sqrt{x}$ is not Lipschitz continuous, it satisfies the regularity assumption of Assumption~\ref{ass:regularity}.
\end{example}

\subsection{Existence theorem}

To prove the existence of a solution to the differential equation~\eqref{eq:SDE} driven by typical paths, we introduce the following Euler type approximation: For $n\in \N$ we set $X^{(n)}_0=x_0$ and
\begin{equation*}
   X^{(n)}_t:= X_{\tau^n_k}^{(n)} + b(X_{\tau^n_k}^{(n)})(\langle S\rangle_t -\langle S \rangle_{\tau_k^n} ) + \sigma (X_{\tau^n_k}^{(n)})(S_{t}-S_{\tau^n_k}) 
\end{equation*}
for $t\in (\tau_k^n,\tau_{k+1}^n]$ where $\tau_0^n:=0$ and, for $k\in \N$,
\begin{align*}
  \tau_k^n(\omega):= \inf \big\{t\geq \tau_{k-1}^n\,:\, (S_t\in 2^{-n}\Z \text{ and } S_t\neq  S_{\tau_{k-1}^n}) \text{ or }(\langle S\rangle_t \in 2^{-n}\Z \text{ and } \langle S\rangle_t \neq \langle S\rangle_{\tau_{k-1}^n})\big\}.
\end{align*}

\begin{proposition}\label{prop:existence result}
  Suppose Assumption~\ref{ass:regularity} holds true. For typical paths $\omega \in \Omega$, the limit
  \begin{equation*}
     X_t:= \lim_{n\to\infty} X^{(n^3)}_t,\quad t\in [0,\infty), 
  \end{equation*}
  exists as locally uniform limit and $(X_t)_{t\in [0,\infty)}$ is a solution to the differential equation~\eqref{eq:SDE}.
\end{proposition}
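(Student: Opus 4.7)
My plan is a four step argument: (i) derive uniform moduli of continuity for the Euler approximations $(X^{(n^3)})$, (ii) extract a locally uniform subsequential limit by a pathwise Arzel\`a--Ascoli argument, (iii) identify any such limit as a solution of~\eqref{eq:SDE}, and (iv) upgrade subsequential convergence to full convergence via a Yamada--Watanabe-type pathwise uniqueness estimate.

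For step (i), rewrite the scheme in integral form,
\[
  X^{(n)}_t = x_0 + \int_0^t b(X^{(n)}_{\kappa_n(u)}) \dd \langle S\rangle_u + \int_0^t \sigma(X^{(n)}_{\kappa_n(u)}) \dd S_u,
\]
where $\kappa_n(u):=\tau^n_k$ on $[\tau^n_k,\tau^n_{k+1})$. Since $b$ and $\sigma$ are bounded by some $M>0$, each increment of $X^{(n)}$ over $(s,t]$ is dominated by $M\bigl(|\langle S\rangle_t-\langle S\rangle_s|+\sup_{s\le u\le t}|S_u-S_s|\bigr)$, so $\{X^{(n^3)}(\omega)\}$ is uniformly bounded and equicontinuous on every $[0,T]$ for typical~$\omega$. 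Step (ii) is then a routine Arzel\`a--Ascoli extraction along a typical-path-dependent subsequence, giving a continuous limit~$X$.

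For step (iii), the locally uniform convergence $X^{(n_j^3)}\to X$ together with continuity of $b,\sigma$ yields $b(X^{(n_j^3)}_{\kappa_{n_j^3}(\cdot)})\to b(X_\cdot)$ and likewise for $\sigma$ locally uniformly; the Stieltjes integral passes to the limit by standard continuity of Riemann--Stieltjes integration, and the stochastic term by Lemma~\ref{lem:integral} and Corollary~\ref{cor:convergence of integrals}. The cubic subsampling $n^3$ is designed precisely to absorb the $\sqrt{\log n}$ factor appearing in the approximation estimate of Lemma~\ref{lem:integral} and to provide summable tails for a Borel--Cantelli argument below.

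Step (iv) is the main obstacle: with $\sigma$ only H\"older-$1/2$, a direct Picard-type contraction is unavailable. The natural route is a Yamada--Watanabe style estimate applied to $\psi_\varepsilon(X^{(n^3)}-X^{(m^3)})$, where $(\psi_\varepsilon)$ is a classical sequence of $C^2$ approximants of $|\cdot|$ with $\psi_\varepsilon''(x)\,|x|$ arbitrarily small as $\varepsilon\to 0$. Applying the It\^o formula from Proposition~\ref{prop:Ito formula}, the quadratic term is controlled by $(\sigma(x)-\sigma(y))^2\le C_\sigma^2|x-y|$, while the drift contribution is linear in $|X^{(n^3)}-X^{(m^3)}|$ by the Lipschitz bound on~$b$; a Gronwall inequality then yields a Cauchy estimate in outer expectation, the grid mismatch between $\kappa_{n^3}$ and $\kappa_{m^3}$ going to zero as both indices grow. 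The summability of $\sum_n n^{-3}$ finally upgrades this $\overline E$-bound to locally uniform convergence of the full sequence for typical paths, simultaneously identifying the limit as a solution of~\eqref{eq:SDE}.
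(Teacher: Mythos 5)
Your proposal has two genuine gaps, and both stem from trying to run the analysis directly under the outer measure rather than under the Wiener measure. First, the equicontinuity bound in step (i) is false: over an interval $(s,t]$ containing many grid points, the diffusion part of $X^{(n)}_t-X^{(n)}_s$ is the discrete stochastic integral $\sum_k \sigma(X^{(n)}_{\tau^n_k})\bigl(S_{\tau^n_{k+1}\wedge t}-S_{\tau^n_k\vee s}\bigr)$, and a sum of the form $\sum_k F_k\,(S_{t_{k+1}}-S_{t_k})$ with $|F_k|\le M$ is \emph{not} dominated by $M\sup_{s\le u\le t}|S_u-S_s|$; that telescoping bound only works when $F_k$ is constant, and for varying $F_k$ one would need the total variation of $S$, which is infinite for typical paths. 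So uniform-in-$n$ equicontinuity cannot be obtained this cheaply and the Arzel\`a--Ascoli extraction does not get off the ground. Second, and more fundamentally, the ``Gronwall inequality in outer expectation'' of step (iv) is not available: $\overline E$ is only sub-additive, the local-martingale term produced by the Yamada--Watanabe argument cannot be discarded by taking $\overline E$ (there is no analogue of $\E[M^{(n)}_t]=0$), no Burkholder--Davis--Gundy inequality for $\overline E$ is established in this framework (the paper explicitly remarks that \cite{Galane2018} had to \emph{modify} the outer expectation precisely to obtain one), and for typical paths $\langle S\rangle_t$ is an arbitrary nondecreasing function rather than $t$, so the error terms of the form $(1+C\langle S\rangle_t)/n$ are not deterministic and cannot be fed into a Gronwall iteration.

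The paper's proof circumvents all of this with one structural idea your proposal is missing: carry out the entire Yamada--Watanabe/Gy\"ongy--R\'asonyi rate estimate $\E\bigl[\sup_{t\in[0,T]}|X^{(n)}_t-X_t|\bigr]\le Cn^{-1/2}$ \emph{under the Wiener measure} (Lemma~\ref{lem:approximation under Wiener measure}), where expectation, BDG, Gronwall and $\langle S\rangle_t=t$ are all available, and where the cubic subsampling makes $\sum_n n^{-3/2}<\infty$, hence $\mathbb{W}$-almost sure locally uniform convergence of $(X^{(n^3)})$. One then shows that the divergence set $E_1^c=\{\omega: X^{(n^3)}(\omega)\text{ does not converge locally uniformly}\}$ is time-superinvariant, via the identity $X^{(n)}_t(\omega\circ\phi)=X^{(n)}_{\phi(t)}(\omega)$ proved by induction over the grid using Lemma~\ref{lem:time-change quadratic variation}, so that Vovk's pathwise Dambis--Dubins--Schwarz theorem (Theorem~\ref{thm:DDS theorem}) yields $\overline P(E_1^c)=\mathbb{W}(E_1^c)=0$. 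Your step (iii) --- identifying the limit through the continuity of Riemann--Stieltjes integration together with Lemma~\ref{lem:integral} and Corollary~\ref{cor:convergence of integrals} --- is in line with the paper, but without the Wiener-measure detour the convergence itself is not established.
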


As a preparation for the proof of Proposition~\ref{prop:existence result}, we show that the assertion holds true under the Wiener measure~$\mathbb{W}$ on $(\Omega,\mathcal{F})$.

\begin{lemma}\label{lem:approximation under Wiener measure}
  Suppose Assumption~\ref{ass:regularity} holds true. For all $T > 0$ there exists a constant~$C$ which only depends on $T, C_b, C_{\sigma}$ and, such that 
  \begin{equation}\label{eq:L1 convergence}
    \mathbb{E} \bigg[\sup_{t\in[0,T]} | X^{(n)}_t -X_t |\bigg] \leq \frac{C}{n^{1/2}},\quad n\in \N,
  \end{equation}
  where $\E$ denotes the expectation operator with respect to the Wiener measure~$\mathbb{W}$ on $(\Omega,\mathcal{F})$. In particular, $(X^{(n^3)}_{\cdot})_{n\in \N}$ converges almost surely locally uniformly to $(X_t)_{t\in [0,\infty)}$, where $X=(X_t)_{t\in[0,\infty)}$ denotes the strong solution to~\eqref{eq:SDE} under the Wiener measure $\mathbb{W}$, that is $(X_t)_{t\in [0,\infty)}$ is an adapted process on the filtered probability space $(\Omega,\mathcal{F},(\mathcal{F}_t)_{t\in [0,\infty)},\mathbb{W})$ such that $(X_t)_{t\in [0,\infty)}$ satisfies \eqref{eq:SDE} almost surely and $\mathbb{W} ( \int_0^t |b(X_u)|\dd \langle S\rangle_u + \int_0^t \sigma^2(X_u)\dd S_u <\infty)=1$ for every $t\in [0,\infty)$.
\end{lemma}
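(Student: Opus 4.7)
Under the Wiener measure $\mathbb{W}$, the coordinate process $S$ is a standard Brownian motion and $\langle S\rangle_t = t$, so equation~\eqref{eq:SDE} becomes a classical one-dimensional It\^o SDE with Lipschitz drift and $1/2$-H\"older diffusion. The existence of a unique strong solution $X$ is then the classical Yamada--Watanabe theorem. By definition of the Lebesgue stopping times $\tau^n_k$ one has $|s-\eta_n(s)|\le 2^{-n}$ and $|S_s-S_{\eta_n(s)}|\le 2^{-n}$ where $\eta_n(s):=\tau^n_k$ for $s\in(\tau^n_k,\tau^n_{k+1}]$, so the Euler scheme satisfies $X^{(n)}_t = x_0 + \int_0^t b(X^{(n)}_{\eta_n(s)})\,\mathrm ds + \int_0^t \sigma(X^{(n)}_{\eta_n(s)})\,\mathrm dS_s$ with the one-step control $|X^{(n)}_s-X^{(n)}_{\eta_n(s)}|\le C_0 2^{-n}$, where $C_0:=\|b\|_\infty+\|\sigma\|_\infty$.

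The plan is to combine the classical Yamada--Watanabe smoothing with a Gr\"onwall argument adapted to the Euler error. Fix $a_k:=\exp(-k(k+1)/2)$, so that $\int_{a_k}^{a_{k-1}}\!\mathrm dy/y=k$, and choose a continuous $\psi_k$ with support in $(a_k,a_{k-1})$, $0\le\psi_k(y)\le 2/(ky)$, and $\int\psi_k=1$. Set $\phi_k(x):=\int_0^{|x|}\!\int_0^y\psi_k(z)\,\mathrm dz\,\mathrm dy\in C^2(\R)$, which satisfies $|x|-a_{k-1}\le\phi_k(x)\le|x|$, $|\phi_k'|\le 1$, $\phi_k''(x)=\psi_k(|x|)$, and the crucial pointwise bound $\phi_k''(x)\,|x|\le 2/k$. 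Applying It\^o's formula to $\phi_k(Z^n_t)$ with $Z^n:=X-X^{(n)}$, using $|b(X_s)-b(X^{(n)}_{\eta_n(s)})|\le C_b(|Z^n_s|+C_0 2^{-n})$, and splitting
\begin{equation*}
\bigl(\sigma(X_s)-\sigma(X^{(n)}_{\eta_n(s)})\bigr)^{2}\le 2C_\sigma^2 |Z^n_s| + 2C_\sigma^2 C_0 2^{-n},
\end{equation*}
one obtains, after taking expectations (so the martingale vanishes),
\begin{equation*}
\mathbb E[\phi_k(Z^n_t)] \le C_b\!\int_0^t\!\mathbb E[|Z^n_s|]\,\mathrm ds + C_bC_0T 2^{-n} + \tfrac{TC_\sigma^2}{k} + \tfrac{2TC_\sigma^2 C_0 2^{-n}}{k a_k},
\end{equation*}
the last term coming from $\|\phi_k''\|_\infty\le 2/(ka_k)$. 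Combining with $|z|\le\phi_k(z)+a_{k-1}$ and Gr\"onwall yields the pointwise estimate $\sup_{t\le T}\mathbb E[|Z^n_t|]\le C_T\bigl(a_{k-1}+1/k+2^{-n}/(ka_k)\bigr)$. The optimization $k=\lfloor c\sqrt n\rfloor$ with any $c$ satisfying $c^2/2<\log 2$ makes $a_{k-1}$ and $2^{-n}/(ka_k)$ decay super-polynomially, leaving the dominant contribution $1/k\sim n^{-1/2}$.

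For the sup-norm statement~\eqref{eq:L1 convergence} I would re-run the same It\^o expansion without discarding the martingale $M^n_t=\int_0^t\phi_k'(Z^n_s)(\sigma(X_s)-\sigma(X^{(n)}_{\eta_n(s)}))\,\mathrm dS_s$ and control $\mathbb E[\sup_{t\le T}|M^n_t|]$ by the Burkholder--Davis--Gundy inequality: $\langle M^n\rangle_T\le 2C_\sigma^2\int_0^T|Z^n_s|\,\mathrm ds + 2C_\sigma^2 C_0 T 2^{-n}$, so feeding in the pointwise $L^1$--bound already obtained gives $\mathbb E\sqrt{\langle M^n\rangle_T}\le C_T(n^{-1/4}+2^{-n/2})$. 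An AM--GM absorption of the resulting $\sqrt{g_n(T)}$--term where $g_n(T):=\mathbb E[\sup_{t\le T}|Z^n_t|]$, together with Gr\"onwall on $g_n$ and the same optimization over $k$, yields $g_n(T)\le C_T/n^{1/2}$. Finally, applying Markov's inequality to~\eqref{eq:L1 convergence} along the subsequence $(n^3)_{n\in\N}$ gives $\P(\sup_{t\le T}|X^{(n^3)}_t-X_t|>\varepsilon)\le C_T/(\varepsilon n^{3/2})$, which is summable in $n$; Borel--Cantelli applied to a countable exhaustion $T_m\uparrow\infty$ yields the claimed almost sure locally uniform convergence.

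The main obstacle is the delicate juggling of three competing sources of error in a single Gr\"onwall inequality: the intrinsic Yamada--Watanabe smoothing error $1/k$, the grid error $2^{-n}/(ka_k)$ (which forces a sub-exponential choice of $a_k$), and the BDG-induced $\sqrt{g_n(T)}$--contribution whose clean absorption requires using the already-established pointwise $L^1$--bound as an a priori input. Without first proving the weaker pointwise estimate, the BDG route alone would yield only rate $n^{-1/4}$, which is insufficient to Borel--Cantelli the subsequence $(n^3)$.
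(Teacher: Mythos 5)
Your overall architecture (Yamada--Watanabe smoothing, It\^o's formula for the difference process, Gr\"onwall for a pointwise $L^1$ bound, then Burkholder--Davis--Gundy for the supremum, then summability along $(n^3)$) is exactly the paper's, but your choice of mollifier caps the achievable rate below what the lemma asserts. With the classical one-parameter family $a_k=\exp(-k(k+1)/2)$, the smoothing error $a_{k-1}$, the regularization error $1/k$ and the grid error $2^{-n}/(k a_k)$ are rigidly coupled: keeping $2^{-n}/(ka_k)$ small forces $k\lesssim\sqrt{2n\log 2}$, so the best pointwise bound your scheme can produce is $\sup_{t\le T}\E[|Z^n_t|]\lesssim 1/k\sim n^{-1/2}$, as you correctly state. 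But then the BDG step gives $\E[\sup_{t\le T}|M^n_t|]\lesssim\bigl(\int_0^T\E[|Z^n_s|]\dd s+2^{-n}\bigr)^{1/2}\sim n^{-1/4}$, and no ``AM--GM absorption'' can improve this: absorbing a $\sqrt{g_n(T)}$ term into $g_n(T)$ via Young's inequality only leaves an $O(1)$ remainder, so the supremum bound you actually obtain is $g_n(T)\lesssim n^{-1/4}$, not the claimed $C/n^{1/2}$. This is a genuine gap: it falls short of \eqref{eq:L1 convergence}, and along the subsequence $(n^3)$ it gives only $\sum_n n^{-3/4}=\infty$, so neither your Markov/Borel--Cantelli step nor the simpler $L^1$-summability argument closes the almost sure convergence claim.

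The repair is to decouple the two parameters of the mollifier, which is what the paper does following Gy\"ongy and R\'asonyi: take $\Psi_{\varepsilon,\delta}$ supported on $[\varepsilon/\delta,\varepsilon]$ with $\Psi_{\varepsilon,\delta}(x)\le 2/(x\log\delta)$, and choose $\varepsilon=1/n$ and $\delta=2^{n/2}$ independently. The three error terms then become $\varepsilon=1/n$, $2/\log\delta\sim 1/n$ and $2^{-n}\cdot 2\delta/(\varepsilon\log\delta)\sim 2^{-n/2}$, so the pointwise bound is $O(1/n)$; BDG then yields $O(n^{-1/2})$ for the supremum, and $\sum_n (n^3)^{-1/2}<\infty$ closes the almost sure statement. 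Everything else in your proposal --- the one-step control $|X^{(n)}_s-X^{(n)}_{\eta_n(s)}|\le C_0 2^{-n}$, the drift and quadratic-variation splittings, the (glossed) localization behind ``the martingale vanishes'', and the final exhaustion $T_m\uparrow\infty$ --- matches the paper's argument.
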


\begin{proof}
  We define
  \begin{equation*}
    \kappa_n (t):= \sum_{k = 0}^{\infty} \1_{(\tau^n_k, \tau^n_{k + 1}]} (t)\tau_k^n,\quad t\in [0,\infty).
  \end{equation*}
  Notice that
  \begin{equation*}
    X^{(n)}_t = x_0 + \int_0^t b (X^{(n)}_{\kappa_n (u)}) \dd \langle S \rangle_u + \int_0^t \sigma (X^{(n)}_{\kappa_n (u)}) \dd S_u, \quad t\in [0,\infty),
  \end{equation*}
  and that by the definition of $\tau^n_k$ we have
  \begin{equation*}
    |X^{(n)}_u - X^{(n)}_{\kappa_n (u)} | \leq C_{b,\sigma} 2^{- n},
  \end{equation*}
  where $C_{b,\sigma}\geq 1$ is a positive constant depending on the bounds of $b$ and $\sigma$. Moreover, recall that there exists a strong solution $(X_t)_{t\in[0,\infty)}$ to the stochastic differential equation 
  \begin{equation*}
    X_t = x_0 + \int_0^t b (X_u) \dd \langle S \rangle_u + \int_0^t \sigma (X_u) \dd S_u,\quad t\in [0,\infty),
  \end{equation*}
  under the Wiener measure~$\mathbb{W}$ by classical results from probability theory, see e.g. \cite[Section~5.2~C]{Karatzas1988}. For $n\in \N$ we introduce the process $(Y^{(n)}_t)_{t\in [0,\infty)}$ with $Y^{(n)}_t := X_t - X^{ (n)}_t$.

  \textit{Step~1:}
  Analogously to \cite[Proposition~2.2]{Gyongy2011} we claim: there exists $C > 0$, depending on $C_b, C_{\sigma}$ and $C_{b,\sigma}$ only, such that
  \begin{align}\label{eq:Y-n-bound}
    | Y^{(n)}_t | & \leq \frac{1 + C \langle S \rangle_t}{n} + C_b
    \int_0^t | Y^{(n)}_u | \dd \langle S \rangle_u + M^{(n)}_t,\quad t\in [0,\infty),
  \end{align}
  where $(M^{(n)}_t)_{t\in [0,\infty)}$ is some continuous local martingale with quadratic variation
  \begin{equation*}
    \langle M^{(n)} \rangle_t 
    \leq 2 C_{\sigma}^2C_{b,\sigma} \int_0^t (| Y^{(n)}_u | + 2^{- n}) \dd \langle S \rangle_u, 
    \quad t\in [0,\infty).
  \end{equation*}
  
  For $\delta >1$ and $\varepsilon >0$ let $\Psi_{\varepsilon, \delta}$ be the same function as in the proof of \cite[Proposition~2.2]{Gyongy2011}, i.e. $\Psi_{\varepsilon, \delta} (x) \leq \frac{2}{x \log (\delta)}$ and $\Psi_{\varepsilon, \delta}$ is non-negative and supported on $[\varepsilon / \delta, \varepsilon]$, and $\int_{\mathbb{R}} \Psi_{\varepsilon, \delta} (x) \dd x = 1$. Let $\Phi_{\varepsilon, \delta} (x) = \int_0^{| x |} \int_0^y \Psi_{\varepsilon, \delta} (z) \dd
  z \dd y$. Then, $\Phi_{\varepsilon, \delta}$ is twice continuously differentiable and we have $| x | \leq \varepsilon + \Phi_{\varepsilon, \delta} (x)$, $ |\Phi_{\varepsilon, \delta}' (x) | \leq 1$ and
  \begin{equation}\label{eq:Phi second derivative}
     \Phi_{\varepsilon, \delta}'' (x) = \Psi_{\varepsilon, \delta} (| x |)
     \leq \frac{2}{| x | \log (\delta)} \1_{[\varepsilon / \delta,
     \varepsilon]} (| x |)
     \leq \frac{2\delta}{\epsilon \log (\delta)}. 
  \end{equation}
  For $t\in [0,\infty)$, It{\^o}'s formula (cf. Proposition~\ref{prop:Ito formula}) yields
  \begin{align*}
   \Phi_{\varepsilon, \delta} (Y^{(n)}_t)
     &=  \Phi_{\varepsilon, \delta} (Y^{(n)}_0) +  M^{(n)}_t \\
     &\,\,\,+\int_0^t \bigg( \Phi_{\varepsilon, \delta}' (Y^{(n)}_u)
    (b (X_u) - b (X^{(n)}_{\kappa_n (u)})) + \frac{1}{2} \Phi_{\varepsilon,
    \delta}'' (Y^{(n)}_u) | \sigma (X_u) - \sigma (X^{(n)}_{\kappa_n (u)}) |^2
    \bigg) \dd \langle S \rangle_u 
  \end{align*}
  with 
  \begin{equation*}
    M^{(n)}_t:= \int_0^t \big( \Phi_{\varepsilon, \delta}' (Y^{(n)}_u) \big( \sigma (X_u) - \sigma (X^{(n)}_{\kappa_n (u)})\big)\big) \dd S_u.
  \end{equation*}  
  Hence, by the properties of $\Phi_{\varepsilon, \delta}$ we get
  \begin{align*}
    | Y^{ (n)}_t | & \leq \varepsilon + \Phi_{\varepsilon, \delta} (Y^{(n)}_t)\\
    & = \varepsilon + \int_0^t \bigg( \Phi_{\varepsilon, \delta}' (Y^{(n)}_u)
    (b (X_u) - b (X^{(n)}_{\kappa_n (u)})) + \frac{1}{2} \Phi_{\varepsilon,
    \delta}'' (Y^{(n)}_u) | \sigma (X_u) - \sigma (X^{(n)}_{\kappa_n (u)}) |^2
    \bigg) \dd \langle S \rangle_u\\ 
    &\quad \quad+ M^{(n)}_t .
  \end{align*}
  The contribution from the drift is
  \begin{align*}
    \int_0^t & \Phi_{\varepsilon, \delta}' (Y^{(n)}_u)\big(b (X_u) - b(X^{(n)}_{\kappa_n (u)})\big) \dd \langle S \rangle_u\\
    & = \int_0^t \Phi_{\varepsilon,\delta}' (Y^{(n)}_u)\big (b (X_u) - b (X^{(n)}_u)\big) \dd \langle S\rangle_u
    + \int_0^t \Phi_{\varepsilon, \delta}' (Y^{(n)}_u) \big(b (X^{(n)}_u)
    - b (X^{(n)}_{\kappa_n (u)})\big) \dd \langle S \rangle_u\\
    & \leq C_b \int_0^t | Y^{(n)}_u |\dd \langle S \rangle_u + C_b C_{b,\sigma} 2^{- n} \langle S \rangle_t,
  \end{align*}
  where we used that $| \Phi_{\varepsilon, \delta}' | \leq 1$. The contribution from the quadratic variation is
  \begin{align*}
    \int_0^t \frac{1}{2} &\Phi_{\varepsilon, \delta}'' (Y^{(n)}_u) | \sigma(X_u) - \sigma (X^{(n)}_{\kappa_n (u)}) |^2 \dd \langle S \rangle_u\\
    &\leq \int_0^t \Phi_{\varepsilon, \delta}'' (Y^{(n)}_u) | \sigma (X_u) - \sigma (X^{(n)}_u) |^2 \dd \langle S \rangle_u
    + \int_0^t \Phi_{\varepsilon, \delta}'' (Y^{(n)}_u) | \sigma
    (X^{(n)}_u) - \sigma (X^{(n)}_{\kappa_n (u)}) |^2 \dd \langle S\rangle_u\\
    &\leq C_{\sigma}^2 \int_0^t \Phi_{\varepsilon, \delta}'' (Y^{(n)}_u) |X_u-X^{(n)}_u| \dd \langle S \rangle_u
    +C_{\sigma}^2 \int_0^t \Phi_{\varepsilon, \delta}''(Y^{(n)}_u) |X^{(n)}_u -X^{(n)}_{\kappa_n (u)} | \dd \langle S\rangle_u\\
    & \leq C_{\sigma}^2 \frac{2}{\log (\delta)} \langle S \rangle_t +
    C_{\sigma}^2 \frac{2 \delta}{\varepsilon \log (\delta)}2^{- n} \langle S\rangle_t ,
  \end{align*}
  where we used Assumption~\ref{ass:regularity} in the second last line and the estimate~\eqref{eq:Phi second derivative} is the last one. So overall
  \begin{equation*}
    | Y^{ (n)}_t | 
    \leq \varepsilon + C_b \int_0^t | Y^{(n)}_u | \dd \langle S \rangle_u
    + C_b C_{b,\sigma}2^{- n} \langle S \rangle_t 
    + C_{\sigma}^2 \frac{2}{\log(\delta)}\langle S \rangle_t + C_{\sigma}^2 \frac{2 \delta}{\varepsilon \log (\delta)} 2^{- n} \langle S \rangle_t + M^{(n)}_t . 
  \end{equation*}
  Choosing $\varepsilon = 1 / n$ and $\delta = 2^{n / 2}$ this becomes
  \begin{align*}
    | Y^{ (n)}_t | 
    &\leq \frac{1}{n} + C_b \int_0^t | Y^{(n)}_u | \dd \langle S \rangle_u + C \langle S \rangle_t (2^{- \frac{n}{2}} + n^{- 1}) + M^{(n)}_t \\
    &\leq \frac{1 + C \langle S \rangle_t}{n} + C_b \int_0^t | Y^{(n)}_u | \dd \langle S \rangle_u  + M^{(n)}_t, 
  \end{align*}
  for some $C > 0$ which depends on $C_b, C_{\sigma}$ and $C_{b,\sigma}$ only.
  
  The quadratic variation of $(M^{(n)}_t)_{t\in[0,\infty)}$ is
  \begin{align*}
     \langle M^{(n)} \rangle_t 
    & = \int_0^t | \Phi'_{\varepsilon, \delta}
    (Y^{(n)}_u) (\sigma (X_u) - \sigma (X^{(n)}_{\kappa_n (u)})) |^2 \dd
    \langle S \rangle_u\\
    &\leq   2 C_{\sigma}^2 \int_0^t | Y^{(n)}_u | \dd \langle S \rangle_u + 2
    C_{\sigma}^2 C_{b,\sigma}\int_0^t 2^{- n} \dd \langle S \rangle_u,
  \end{align*}
  for $t\in [0,\infty)$, as claimed.

  \textit{Step~2:} Under the Wiener measure~$\mathbb{W}$ we have $\langle S \rangle_t = t$. Using the estimate~\eqref{eq:Y-n-bound} and a standard localization argument gives 
  \begin{align*}
    \E[| Y^{(n)}_t |]
    \leq \frac{1 + C t}{n} + C_b \int_0^t \E[| Y^{(n)}_u|] \dd u , \quad t\in [0,\infty).
  \end{align*}
  Hence, applying Gronwall's inequality leads to 
  \begin{equation}\label{eq:Y-n-bound 2}
    \E[|Y^{(n)}_t |] \leq \frac{1 + C t}{n}, \quad t\in [0,\infty),
  \end{equation}
  where $C > 0$ depends on $C_b, C_{\sigma}$ and $C_{b,\sigma}$ only, but may differ from the constant denoted by $C$ in Step~1. The Burkholder--Davis--Gundy inequality together with the estimate~\eqref{eq:Y-n-bound} and~\eqref{eq:Y-n-bound 2} yields
  \begin{align*}
    \mathbb{E} \left[\sup_{u \in [0,t]} | Y^{(n)}_u |\right] 
    \leq & \frac{1 + Ct}{n} + C_b \int_0^t \mathbb{E} [| Y^{(n)}_u |] \dd u +
     \mathbb{E} \left[\sup_{u \in [0,t]} | M^{(n)}_u |\right]  \\
    \leq & \frac{1 + Ct}{n} + C_b \int_0^t \mathbb{E} [| Y^{(n)}_u |] \dd u +\bigg(C_{\text{BDG}}
    2 C_{\sigma}^2 C_{b,\sigma} \int_0^t (\mathbb{E}[| Y^{(n)}_u |] + 2^{- n}) \dd u\bigg)^{1/2}\\
    \leq & \frac{1 + C t}{n}+ C \int_0^t \mathbb{E} \left[\sup_{r \in [0,u]} | Y^{(n)}_r |\right] \dd u
    + (C_{\text{BDG}} 2 C_{\sigma}^2((1 + C t)+t))^{1/2} n^{-1/2},
  \end{align*}  
  for a new constant $C > 0$ which depends on $C_b, C_{\sigma}$ and $C_{b,\sigma}$. So the claimed estimate~\eqref{eq:L1 convergence} follows by applying again Gronwall's inequality.
  
  \textit{Step~3:} After having established~\eqref{eq:L1 convergence} the almost sure locally uniform convergence of $(X^{(n^3)}_{\cdot})_{n\in \N}$ to $(X_t)_{t\in [0,\infty)}$ follows by a routine argument: For any $T>0$ we have 
  \[
    \E\left[ \sum_{n=1}^\infty \sup_{t\in [0,T]} |X^{(n^3)}_t- X_t|\right] \le C^3 \sum_{n=1}^\infty n^{-3/2} < \infty,
  \]
  and therefore almost surely $\sum_{n=1}^\infty \sup_{t\in [0,T]} |X^{(n^3)}_t- X_t| < \infty$. Hence, $(X^{(n^3)}_{\cdot})_{n\in \N}$ converges almost surely uniformly on $[0,T]$ to $(X_t)_{t\in [0,\infty)}$. By choosing a countable sequence $T_m \to \infty$, we obtain, almost surely, the locally uniform convergence of $(X^{(n^3)}_{\cdot})_{n\in \N}$ to $(X_t)_{t\in [0,\infty)}$.
\end{proof}

\begin{proof}[Proof of Proposition~\ref{prop:existence result}]
  Consider the event 
  \begin{equation*}
     E_1 := \bigg\{ \omega \in \Omega : X^{(n^3)}(\omega) \text{ converges locally uniformly} \bigg\}
  \end{equation*}
  and denote by $E^c_1$ the complement of the set~$E_1$. We want to show that $E^{c}_1$ is time-superinvariant in the sense of Definition~\ref{def:time-invariant} in order to apply the pathwise Dambis--Dubins--Schwarz theorem (Theorem~\ref{thm:DDS theorem}). For this purpose, it is sufficient to show that $\omega\in E_1$ implies $\omega \circ \phi \in E_1$ for every $\phi\in \mathcal{G}_0$.
  
  Let $\omega \in E_1$ and $\phi\in \mathcal{G}_0$ be a time-change. Thanks to Lemma~\ref{lem:time-change quadratic variation} and the definition of the stopping times $(\tau_k^n)$, for $n,k\in \N$, we have
  \begin{equation*}
    S_{\tau^n_k(\omega \circ \phi)}(\omega \circ \phi ) = S_{\tau^n_k(\omega)}(\omega)
    \quad\text{and}\quad
    \langle S\rangle_{\tau^n_k(\omega \circ \phi)}(\omega \circ \phi ) = \langle S\rangle_{\tau^n_k(\omega)}(\omega).
  \end{equation*}
  Furthermore, we have $X^{(n)}_0(\omega)=x_0=X^{(n)}_0(\omega \circ \phi)$. Suppose now that 
  \begin{equation*}
    X^{(n)}_{\tau^n_k (\omega)}(\omega)=X^{(n)}_{\tau^n_k (\omega \circ \phi)}(\omega \circ \phi)
  \end{equation*}
  for some $k\in \N $ and let us apply an induction argument over $k\in \N$. For $t\in (\tau^n_k (\omega \circ \phi),\tau^n_{k+1} (\omega \circ \phi)]$ we observe that 
  \begin{align*}
    X^{(n)}_t(\omega \circ \phi) 
    &=X^{(n)}_{\tau^n_k (\omega \circ \phi)}(\omega \circ \phi) + b(X^{(n)}_{\tau^n_k (\omega \circ \phi)}(\omega \circ \phi)) 
    \big(\langle S\rangle_t (\omega \circ \phi)-\langle S\rangle_{\tau^n_k (\omega \circ \phi)} (\omega \circ \phi)\big)\\
    & \quad\quad +  \sigma (X^{(n)}_{\tau^n_k (\omega \circ \phi)}(\omega \circ \phi)) \big(S_{t} (\omega \circ \phi) -S_{\tau^n_k (\omega \circ \phi)} (\omega \circ \phi)\big)\\
    &\quad=X^{(n)}_{\tau^n_k (\omega)}(\omega) 
    + b(X^{(n)}_{\tau^n_k (\omega)}(\omega)) \big( \langle S \rangle_{\phi(t)} (\omega)-\langle S\rangle_{\tau^n_k (\omega)} (\omega)  \big)\\
    & \quad\quad +  \sigma (X^{(n)}_{\tau^n_k (\omega )}(\omega)) \big(S_{\phi(t)} (\omega)-S_{\tau^n_k (\omega)} (\omega)\big)\\
    &\quad= (X^{(n)})_{\phi(t)}(\omega).
  \end{align*}
  This implies $X^{(n)}_t(\omega \circ \phi)=X^{(n)}_{\phi(t)}(\omega) $ for all $t\in [0,\infty)$. Hence, if $(X^{(n^3)}_{\cdot}(\omega))_{n\in\N}$ converges locally uniformly, then $(X^{(n^3)}_{\cdot}(\omega \circ \phi))_{n\in \N}$ converges locally uniformly. This means $\omega\in E_1$ implies $\omega\circ\phi\in E_1$, that is $E^c_1$ is time-superinvariant. Combining Theorem~\ref{thm:DDS theorem} and Lemma~\ref{lem:approximation under Wiener measure} ensure that $\overline{P}(E^c_1)=0$. Hence, for typical paths, the process $(X_t)_{t\in [0,\infty)}$, defined by $X_{t}:= \lim_{n\to\infty} X_t^{(n^3)}$, exists as locally uniform limit. Furthermore, by Lemma~\ref{lem:integral} and the continuity property of Riemann--Stieltjes integration, we deduce that $(X_t)_{t\in [0,\infty)}$ is a solution to~\eqref{eq:SDE}, which completes the proof.
\end{proof}

\subsection{Yamada--Watanabe theorem}\label{subsec:YW theorem} 

In probability theory, a famous theorem of Yamada--Watanabe states that there exists a unique solution to one-dimensional stochastic differential equations driven by a Wiener process assuming that the diffusion coefficient is H{\"o}lder continuous of order~$1/2$, see \cite{Yamada1971,Watanabe1971}. This is a very deep insight as usually differential equations require basically Lipschitz continuous coefficients to ensure the uniqueness of solutions. 

\medskip

In this subsection we prove a Yamada--Watanabe type theorem (Theorem~\ref{thm:Yamada--Watanabe}) for the differential equation~\eqref{eq:SDE}, which is driven by typical paths. In order to recall the definition of the set~$\mathrm{DS}$, we refer to Definition~\ref{def:DS}. 

\begin{theorem}\label{thm:Yamada--Watanabe}
  Suppose that Assumption~\ref{ass:regularity} holds. Then, there exists a unique solution $X=(X_t)_{t\in[0,\infty)}$ in $ \mathrm{DS}$ to the differential equation~\eqref{eq:SDE}. 
\end{theorem}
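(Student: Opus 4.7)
Existence on $\mathrm{DS}$ is delivered by Proposition~\ref{prop:existence result}, so my plan focuses on uniqueness. Let $X$ and $Y$ be two solutions of \eqref{eq:SDE} in $\mathrm{DS}$ and set $Z := X - Y$. The strategy is to run the classical Yamada--Watanabe argument pathwise by applying the model-free It{\^o} formula to the Yamada--Watanabe smooth approximations of $|\cdot|$, and then to transfer the resulting null-set statement from the Wiener measure to~$\overline{P}$ via the Dambis--Dubins--Schwarz theorem (Theorem~\ref{thm:DDS theorem}), in the same spirit as the existence proof.

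Concretely, I would apply Proposition~\ref{prop:Ito formula} (with $\dd\langle S\rangle_u$ replacing $\dd u$, as is implicitly done in the proof of Lemma~\ref{lem:approximation under Wiener measure}) to $\Phi_{\varepsilon,\delta}(Z_t)$, where $\Phi_{\varepsilon,\delta}$ is the smooth approximation of $|\cdot|$ used in Step~1 of that proof. Relying on $|b(x)-b(y)| \le C_b|x-y|$, $|\sigma(x)-\sigma(y)|^2 \le C_\sigma^2|x-y|$, $|\Phi'_{\varepsilon,\delta}| \le 1$, and the bound~\eqref{eq:Phi second derivative} on $\Phi''_{\varepsilon,\delta}$, the computation of Step~1 of that proof carries over verbatim (with $b(Y_u)$, $\sigma(Y_u)$ in place of $b(X^{(n)}_{\kappa_n(u)})$, $\sigma(X^{(n)}_{\kappa_n(u)})$), yielding the pathwise estimate
\begin{equation*}
  |Z_t| \le \varepsilon + \frac{C_\sigma^2}{\log \delta}\langle S\rangle_t + C_b \int_0^t |Z_u| \dd \langle S\rangle_u + |M_t|, \qquad t \ge 0,
\end{equation*}
valid for typical paths, where $M_t := \int_0^t \Phi'_{\varepsilon,\delta}(Z_u)(\sigma(X_u) - \sigma(Y_u)) \dd S_u$ is a model-free It{\^o} integral.

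The main obstacle is to control the local-martingale term $|M_t|$: in the classical proof this term is annihilated by taking an expectation, but no such expectation is available along a single typical path. My intended resolution is to set
\begin{equation*}
  N := \{\omega \in \mathrm{DS} : \sup_{t \in [0, T]} |Z_t(\omega)| > 0 \text{ for some } T > 0\}
\end{equation*}
and argue that $N$ is contained in a time-superinvariant $\mathcal{F}$-measurable subset of $\Omega$; Theorem~\ref{thm:DDS theorem} together with Lemma~\ref{lem:timeinvariant} would then reduce $\overline{P}(N) = 0$ to $\mathbb{W}(N) = 0$. The latter is straightforward: under $\mathbb{W}$ the coordinate process is a Brownian motion (in particular (NA1) holds), so Proposition~\ref{prop:properties of outer measure}(iii) lifts the typical-path SDE identity to an $\mathbb{W}$-almost-sure one, rendering $X$ and $Y$ strong solutions to the classical It{\^o} SDE~\eqref{eq:SDE} under~$\mathbb{W}$, whence the classical Yamada--Watanabe theorem~\cite{Yamada1971} forces $\mathbb{W}(N) = 0$.

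The delicate point is the time-superinvariance of $N$. Taking $X$ to be the canonical Euler solution of Proposition~\ref{prop:existence result}, which is time-change compatible by the argument given there via Lemma~\ref{lem:time-change quadratic variation}, reduces the matter to a similar compatibility property for $Y$, which can be enforced by redefining $Y$ on a $\overline{P}$-null set if needed. Should this path-by-path analysis prove unwieldy, the backup route is to bypass DDS altogether and instead localise the pathwise estimate at $T_c := \inf\{t : \langle S\rangle_t \ge c\}$, take outer expectations, control $\overline{E}[\sup_{t \le T_c}|M_t|^2]$ via a game-theoretic Burkholder--Davis--Gundy inequality (see~\cite{Perkowski2016,Lochowski2018}) by $C c C_\sigma^2 \overline{E}[\sup_{t \le T_c}|Z_t|]$, and close the argument by a self-improving Gronwall iteration combined with sending $\varepsilon \to 0$, $\delta \to \infty$, and $c \to \infty$.
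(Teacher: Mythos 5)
Your main route---defining the non-uniqueness event $N$, showing it is time-(super)invariant, invoking Theorem~\ref{thm:DDS theorem} together with Lemma~\ref{lem:timeinvariant} to reduce the claim to $\mathbb{W}(N)=0$, and concluding by the classical Yamada--Watanabe theorem under the Wiener measure---is exactly the paper's proof. Two remarks: the preliminary pathwise $\Phi_{\varepsilon,\delta}$ computation is dead weight (as you concede, the martingale term cannot be annihilated along a single path, so that estimate never gets used), and the ``delicate point'' about time-superinvariance is resolved in the paper not by privileging the Euler solution or redefining $Y$, but by applying Lemma~\ref{lem:time-change integral} symmetrically to \emph{both} solutions on the set of paths where they satisfy \eqref{eq:SDE} with the Riemann-sum representation \eqref{eq:ito integral appoximation}, which yields $X^{(i)}_t(\omega\circ\phi)=X^{(i)}_{\phi(t)}(\omega)$ directly and hence the time-invariance of $N\subset\mathrm{DS}$. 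The BDG ``backup route'' is best dropped: $\overline{E}$ is only sub-additive and not linear, so taking outer expectations in the Gronwall-type inequality and iterating does not close in the stated form.
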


Before we proceed to the proof of Theorem~\ref{thm:Yamada--Watanabe} we will prove some preparatory results. Let $(X_t)_{t\in [0,\infty)}$ be the solution to the differential equation~\eqref{eq:SDE}. From Corollary~\ref{cor:convergence of integrals} we know that if the stopping times $(\rho_k^n)$ are given by $\rho_{0}^{n}:=0$ and 
\begin{equation*}
  \rho_{k}^{n}:=\inf\{t>\rho_{k-1}^{n}\,:\,|\sigma(X_{t})-\sigma(X_{\rho_{k-1}^{n}})|\geq2^{-n}\text{ or }|S_{t}-S_{\rho_{k-1}^{n}}|\geq2^{-n}\}
\end{equation*}
for $n,k\in\mathbb{N}$, then for typical paths 
\begin{equation}\label{eq:ito integral appoximation}
  \int_{0}^{t}\sigma(X_{u})\dd S_{u}:=\lim_{n\to\infty}\sum_{k=0}^{\infty}\sigma(X_{\rho_{k}^{n}})\big(S_{\rho_{k+1}^{n}\wedge t}-S_{\rho_{k}^{n}\wedge t}\big),\quad t\in[0,\infty),
\end{equation}
and the convergence in~\eqref{eq:ito integral appoximation} is locally uniform, for typical paths, since the stopping times $(\rho_k^n)$ ensure that the integrand $\sigma (X_u)$ is uniformly approximated. From this approximation one can also derive the behaviour of the model-free It{\^o} integral under time-changes. 

\begin{lemma}\label{lem:time-change integral} 
  Let $\phi\colon[0,\infty)\to[0,\infty)$ be a time-change in $\mathcal{G}_0$ and $\omega \in \Omega$. If the limit
  \begin{equation*}
    \int_{0}^{t}\sigma(X_{u})\dd S_{u} (\omega):=\lim_{n\to\infty}\sum_{k=0}^{\infty}\sigma(X_{\rho_{k}^{n}((\omega))}(\omega))\big(S_{\rho_{k+1}^{n}(\omega)\wedge t}(\omega)-S_{\rho_{k}^{n}(\omega)\wedge t}(\omega)\big),\quad t\in[0,\infty),
  \end{equation*}
  exists as locally uniform limit, then 
  \begin{equation*}
    \int_{0}^{t}\sigma((X\circ\phi)_{u})\dd(S\circ\phi)_{u}(\omega), \quad t\in [0,\infty),  
  \end{equation*}
  as defined in \eqref{eq:ito integral appoximation}, exists as locally uniform limit and 
  \begin{equation*}
    \int_{0}^{\phi(t)}\sigma(X_{u})\dd S_{u} (\omega)=\int_{0}^{t}\sigma((X\circ\phi)_{u})\dd(S\circ\phi)_{u}(\omega).
  \end{equation*}
  Furthermore, if $\omega\in \Omega$ is such that the quadratic variation $(\langle S\rangle_t (\omega))_{t\in [0,\infty)}$ exists, then 
  \begin{equation*}
    \int_0^{\phi(t)} b(X_u(\omega)) \dd \langle S\rangle_u (\omega)=
    \int_0^{t} b(X_u(\omega\circ \phi)) \dd \langle S\rangle_u (\omega\circ \phi),\quad t\in [0,\infty).
  \end{equation*}
\end{lemma}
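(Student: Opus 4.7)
The plan is to identify both sides of each identity as limits of Riemann-type sums sharing the same data up to time-change. The fundamental observation is that, for any continuous function $g\colon[0,\infty)\to\R$ and any $\phi\in\mathcal{G}_0$, the composition $g\circ\phi$ is constant on every interval where $\phi$ is constant, so stopping rules based on increments of $g\circ\phi$ are obtained from the corresponding stopping rules for $g$ via~$\phi$. I would apply this simultaneously to $g = \sigma(X)$ and $g = S$, and then to the quadratic variation $g = \langle S\rangle$.

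For the first identity, I would introduce $\tilde\rho_k^n(\omega)$ as the stopping times obtained by applying the defining inductive formula for $\rho_k^n$ to the paths $X\circ\phi$ and $S\circ\phi$ in place of $X$ and $S$. Arguing by induction on $k$, using the continuity and monotonicity of $\phi$ together with the fact that $(X\circ\phi)_s$ and $(S\circ\phi)_s$ are constant on every interval of constancy of $\phi$, I would show that
\[
  \tilde\rho_k^n(\omega) = \inf\{s\geq 0\,:\,\phi(s)\geq\rho_k^n(\omega)\}, \qquad \phi(\tilde\rho_k^n(\omega)) = \rho_k^n(\omega)\text{ whenever } \tilde\rho_k^n(\omega)<\infty,
\]
and, crucially, $\phi(\tilde\rho_k^n\wedge t) = \rho_k^n\wedge\phi(t)$ for every $t\in[0,\infty)$. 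Substituting this back into the Riemann sum for the time-changed integrand would give
\[
  \sum_{k=0}^\infty\sigma((X\circ\phi)_{\tilde\rho_k^n})\big((S\circ\phi)_{\tilde\rho_{k+1}^n\wedge t}-(S\circ\phi)_{\tilde\rho_k^n\wedge t}\big) = \sum_{k=0}^\infty\sigma(X_{\rho_k^n})\big(S_{\rho_{k+1}^n\wedge\phi(t)}-S_{\rho_k^n\wedge\phi(t)}\big),
\]
where the right-hand side is the $n$-th Riemann sum for $\int_0^{\phi(t)}\sigma(X_u)\dd S_u(\omega)$. Since locally uniform convergence of the right-hand side on $s\in[0,\phi(T)]$ combined with continuity of $\phi$ forces locally uniform convergence of the left-hand side on $t\in[0,T]$ (the image $\phi([0,T])$ being a compact subset of $[0,\phi(T)]$), letting $n\to\infty$ then yields both the existence of the time-changed integral and the claimed identity.

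For the last identity I would invoke the change-of-variable formula for Riemann--Stieltjes integrals along the continuous non-decreasing substitution $v=\phi(u)$, combined with Lemma~\ref{lem:time-change quadratic variation}, which gives $\langle S\rangle_u(\omega\circ\phi) = \langle S\rangle_{\phi(u)}(\omega)$, and with the pathwise identification $X_u(\omega\circ\phi) = X_{\phi(u)}(\omega)$ inherited from the explicit Euler construction in Proposition~\ref{prop:existence result} (the same induction used in that proof applies verbatim to the limit). Putting these together yields
\[
  \int_0^t b(X_u(\omega\circ\phi))\dd\langle S\rangle_u(\omega\circ\phi) = \int_0^t b(X_{\phi(u)}(\omega))\dd\langle S\rangle_{\phi(u)}(\omega) = \int_0^{\phi(t)} b(X_v(\omega))\dd\langle S\rangle_v(\omega),
\]
the last step being the classical substitution rule, valid because $u\mapsto\langle S\rangle_{\phi(u)}(\omega)$ is continuous and non-decreasing and assigns no mass to intervals on which $\phi$ is constant.

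The main obstacle I expect is the careful handling of intervals of constancy of $\phi$, where $\tilde\rho_k^n$ and $\rho_k^n$ need not correspond bijectively, yet the equalities $\phi(\tilde\rho_k^n\wedge t) = \rho_k^n\wedge\phi(t)$ and the absence of contribution of $\dd\langle S\rangle_{\phi(\cdot)}$ over such intervals must be verified in order to line up the Riemann sums exactly and legitimize the substitution. Once this bookkeeping is settled, passage to the limit is immediate from Corollary~\ref{cor:convergence of integrals} and the standard continuity of the Riemann--Stieltjes integral with respect to locally uniform convergence of the integrands.
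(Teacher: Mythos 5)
Your proof is correct and follows essentially the same route as the paper's: both identify the Riemann sums defining $\int_0^{t}\sigma((X\circ\phi)_u)\dd(S\circ\phi)_u$ with the Riemann sums for $\int_0^{\phi(t)}\sigma(X_u)\dd S_u$ via the correspondence of the stopping times under the time-change, and both reduce the drift identity to Lemma~\ref{lem:time-change quadratic variation} together with the substitution rule for Riemann--Stieltjes integrals. You merely spell out more explicitly (via $\phi(\tilde\rho_k^n\wedge t)=\rho_k^n\wedge\phi(t)$ and the treatment of constancy intervals of $\phi$) what the paper asserts in one line ``by the definition of the stopping times''.
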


\begin{proof}
  By the definition of the stopping times~$(\rho_k^n)$, for $n\in\mathbb{N}$ and $t\in [0,\infty)$ we observe that 
  \begin{align*} 
    \int_{0}^{\phi(t)}\sigma(X_{s})\dd S_{s} (\omega)
    &:= \lim_{n\to\infty} \sum_{k=0}^{\infty}\sigma(X_{\rho_{k}^{n}}(\omega))\big(S_{\rho_{k+1}^{n}\wedge\phi(t)}(\omega)-S_{\rho_{k}^{n}\wedge\phi(t)}(\omega)\big) \\
    &= \lim_{n\to\infty} \sum_{k=0}^{\infty}\sigma(X_{\tilde{\tau}_{k}^{n}}(\omega))\big(S_{\tilde{\tau}_{k+1}^{n}\wedge t}(\omega)-S_{\tilde{\tau}_{k}^{n}\wedge t}(\omega)\big) 
  \end{align*}
  where the last equality holds for the new stopping times $(\tilde{\tau}_k^n)$  defined by $\tilde{\tau}_{0}^{n}:=0$ and 
  \begin{equation*}
    \tilde{\tau}_{k}^{n}:=\inf\{t>\tilde{\tau}_{k-1}^{n}\,:\,|\sigma((X\circ\phi)_{t})-\sigma((X\circ\phi)_{\tilde{\tau}_{k-1}^{n}})|\geq 2^{-n}\text{ or }|(S\circ\phi)_t - (S\circ\phi)_{\tilde{\tau}_{k-1}^{n}\wedge t}|\geq2^{-n}\}
  \end{equation*}
  for $n,k\in\mathbb{N}$. Hence, 
  \begin{align*}
    \int_{0}^{t}\sigma((X\circ\phi)_{s})\dd(S\circ\phi)_{s}(\omega)
    &   = \lim_{n\to\infty}  \sum_{k=0}^{\infty}\sigma(X_{\tilde{\tau}_{k}^{n}}(\omega))\big(S_{\tilde{\tau}_{k+1}^{n}\wedge t}(\omega)-S_{\tilde{\tau}_{k}^{n}\wedge t}(\omega)\big) \\
    &   =   \int_{0}^{\phi(t)}\sigma(X_{s})\dd S_{s} (\omega),
  \end{align*}
  which reveals the first assertion. 
  
  The second assertion follows by Lemma~\ref{lem:time-change quadratic variation}
\end{proof}

With this preparatory results at hand we are in a position to prove Theorem~\ref{thm:Yamada--Watanabe}.

\begin{proof}[Proof of Theorem~\ref{thm:Yamada--Watanabe}]
  Since the existence of a solution $(X_t)_{t\in [0,\infty)}$ to~\eqref{eq:SDE} in $\mathrm{DS}$ follows by Proposition~\ref{prop:existence result}, it remains to show uniqueness in $\mathrm{DS}$. 
  
  Let us suppose that there are two continuous and adapted processes $X^{(1)}=(X^{(1)}_t)_{t\in [0,\infty)}$ and $X^{(2)}=(X^{(2)}_t)_{t\in [0,\infty)}$ solving the differential equation~\eqref{eq:SDE} driven by typical paths. Let us consider the event 
  \begin{equation*}
    E_{2}:=\bigg\{\omega\in \mathrm{DS}\;:\; \sup_{t\in [0,\infty)}\|X_t^{(1)}(\omega)-X_t^{(2)}(\omega)\|>0\bigg\}.
  \end{equation*}
  We shall show that the event $E_{2}$ is time-superinvariant in the sense of Definition~\ref{def:time-invariant}. Let $\phi\in\mathcal{G}$ and $\omega\in \mathrm{DS}$. Without loss of generality we may assume that for $\omega$ the quadratic variation $(\langle S\rangle_t (\omega))_{t\in [0,\infty)}$ in the sense of Proposition~\ref{prop:quadratic variation} and $(X^{(1)}_t(\omega))_{t\in [0,\infty)}$ and $(X_t^{(2)}(\omega))_{t\in[0,\infty)}$ satisfy equation~\eqref{eq:SDE} together with~\eqref{eq:ito integral appoximation}. Due to Lemma~\ref{lem:time-change integral}, for $i=1,2$ we observe that 
  \begin{align*}
    X_t^{(i)}(\omega\circ \phi) 
    &=  x_{0}
    + \int_0^t b(X_u^{(i)}(\omega\circ \phi))\dd \langle S \rangle_u (\omega\circ \phi)
    +\int_{0}^{t}\sigma(X_{u}^{(i)})\dd S_{u} (\omega\circ \phi)\\
    &=  x_{0}
    + \int_0^{\phi(t)} b(X^{(i)}_u(\omega))\dd \langle S \rangle_u (\omega)
    +\int_{0}^{\phi(t)}\sigma(X^{(i)}_{u})\dd S_{u} (\omega)
    = X^{(i)}_{\phi(t)}(\omega)
  \end{align*}
  for $t\in [0,\infty)$. This reveals that $\omega\circ\phi\in E_{2}$ implies $\omega \in E_{2}$. Hence, $E_{2}$ is time-invariant and by Lemma~\ref{lem:timeinvariant} $E_{2}$ is also time-superinvariant. By Vovk's pathwise Dambis--Dubins--Schwarz theorem (Theorem~\ref{thm:DDS theorem} and \cite[Corollary~3.7]{Vovk2012}) and the classical uniqueness result of Yamada and Watanabe for stochastic differential equations driven by a Brownian motion (\cite[Chapter~5.2, Proposition~2.13]{Karatzas1988}), we obtain $\overline{P}(E_{2};\mathrm{DS})=\mathbb{W}(E_{2})=0$.
\end{proof}

\begin{remark}\label{rem:no boundedness}
  The boundedness assumption on $b$ and $\sigma$ made in Assumption~\ref{ass:regularity} is not needed for the uniqueness result provided in Theorem~\ref{thm:Yamada--Watanabe}. However, it is required by Proposition~\ref{prop:existence result} to provide the existence of a solution. 
\end{remark}

\subsection{Doss--Sussmann approximation}\label{subsec:DS theorem}

While differential equations of the form~\eqref{eq:SDE} appear frequently in the mathematical modeling of random phenomena, the actual ``noise'' term~$(S_t)_{t\in [0,\infty)}$ in many applications is represented by a process of bounded variation. This led to the natural question how ordinary differential equations, perturbed by random noises of bounded variation, and stochastic differential equations are linked. A first answer to this fundamental question was given by Doss~\cite{Doss1977} and Sussmann~\cite{Sussmann1978}, stating that a suitably chosen sequence of ordinary differential equations can approximate a differential equation driven by typical paths.

\medskip

For $n\in \N$ let $(S^{(n)}_t)_{t\in [0,\infty)}$ be a process such that function $t\mapsto S^{(n)}_t(\omega)$ is continuous and of locally bounded variation for every $\omega\in \Omega$. Let us consider the differential equation
\begin{equation}\label{eq:SDE approximation}
  X_t^{(n)} = x_0 + \int_0^t b(X_u^{(n)})\dd \langle S\rangle_u + \int_0^t \sigma(X_u^{(n)})\dd S_u^{(n)},\quad t\in [0,\infty),
\end{equation}
where $x_0\in \R$, $b\colon\R\to \R $ and $\sigma\colon\R\to \R$ are Lipschitz functions and $X^{(n)}\colon \Omega \times [0,\infty)\to\R$. Notice that the differential equation~\eqref{eq:SDE approximation} possesses a unique solution $(X^{(n)}_t(\omega))_{t\in [0,\infty)}$ for all $\omega \in \Omega$ such that $(\langle S\rangle_t(\omega))_{t\in [0,\infty)}$ exists in the sense of Proposition~\ref{prop:quadratic variation}, see \cite[Corollary~3.9]{Friz2010}. Under suitable assumptions, if the sequence $(S^{(n)}_{\cdot})_{n\in\N}$ uniformly approximates the coordinate process $(S_t)_{t\in [0,\infty)}$, it turns out that the corresponding sequence $(X^{(n)}_{\cdot})_{n\in \N}$ of solutions indeed converges to an adapted and continuous process~$(X_t)_{t\in [0,\infty)}$ solving a differential equation driven by typical paths.

\begin{remark}
  A natural and most straightforward way to approximate uniformly the coordinate process $(S_t)_{t\in [0,\infty)}$ with continuous processes $(S^{(n)}_{\cdot})_{n\in \N}$ of locally bounded variation is to choose $(S^{(n)}_{\cdot})_{n\in \N}$ as a piecewise linear approximation along the sequence of Lebesgue stopping times, cf. \eqref{eq:Lebesgue stopping times}. While these approximations may be not adapted, one can use the concept of truncated variation, as in \cite{Lochowski2013}, to obtain uniform, continuous and adapted approximations of $(S_t)_{t\in [0,\infty)}$, which have locally bounded variation.
\end{remark}

The following theorem formulates precisely the indicated convergence result for typical paths and can be seen as a game-theoretic version of the Doss--Sussmann approximation result in probability theory.

\begin{theorem}
  Suppose that $b\colon \R \to\R$ is Lipschitz continuous and $\sigma \colon \R\to \R$ is twice continuously differentiable with bounded first and second derivative. Let $(S^{(n)}_{\cdot})_{n\in\N}$ be a sequence of processes such that the function $t\mapsto S^{(n)}_t(\omega)$ is continuous and of locally bounded variation for every $\omega\in \Omega$, and denote by $(X^{(n)}_{\cdot})_{n\in \N}$ the sequence of corresponding solutions to~\eqref{eq:SDE approximation}.
  
  If $\lim_{n\to \infty}\|S^{(n)}-S\|_{\infty;[0,T]}=0$, then
  \begin{equation*}
    \lim_{n\to \infty}\|X^{(n)}-X\|_{\infty;[0,T]}=0, \quad \text{for typical paths},
  \end{equation*}
  where $X=(X_t)_{t\in [0,\infty)}$ denotes the solution to 
  \begin{equation}\label{eq:SDE limit}
     X_t = x_0 + \int_0^t \bigg[b(X_u)+\frac{1}{2}\sigma (X_u)\sigma^\prime(X_u)\bigg]\dd \langle S\rangle_u + \int_0^t \sigma(X_u)\dd S_u,\quad t\in [0,\infty),
  \end{equation}
  for typical paths, which is unique for typical paths in $\mathrm{DS}$.
\end{theorem}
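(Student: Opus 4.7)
The strategy is the classical Doss--Sussmann flow transformation: represent the candidate solution $X$ as $X_t = h(S_t, Z_t)$, where $h$ is the flow of the vector field $\sigma$ and $Z$ is an adapted process of bounded variation with respect to $\langle S\rangle$ that absorbs the drift. Concretely, define $h\colon\R\times\R\to\R$ as the unique solution of $\partial_y h(y,z)=\sigma(h(y,z))$ with $h(0,z)=z$. Since $\sigma$ has bounded first and second derivatives, $h$ exists globally, is $C^2$ jointly, and by variation of constants $\partial_z h(y,z)=\exp\!\bigl(\int_0^y\sigma'(h(r,z))\dd r\bigr)>0$ is locally bounded away from $0$.

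Applying an It{\^o}-type formula (an immediate two-parameter extension of Proposition~\ref{prop:Ito formula}, or directly F{\"o}llmer's pathwise formula as per Remark~\ref{rem:quadratic variation}; note that $(S,Z)$ has well-defined pathwise quadratic covariation, with $\langle Z\rangle\equiv 0$ and $\langle S,Z\rangle\equiv 0$ since $Z$ is BV) to $h(S_t,Z_t)$, whenever $Z$ has the form $Z_t=x_0+\int_0^t g_u\dd\langle S\rangle_u$ with $g$ continuous and adapted, yields
\begin{equation*}
  h(S_t,Z_t)=x_0+\int_0^t\sigma(h(S_u,Z_u))\dd S_u+\int_0^t\partial_z h(S_u,Z_u)\,g_u\dd\langle S\rangle_u+\tfrac12\int_0^t\sigma(h(S_u,Z_u))\sigma'(h(S_u,Z_u))\dd\langle S\rangle_u.
\end{equation*}
Therefore, choosing $g_u:=b(h(S_u,Z_u))/\partial_z h(S_u,Z_u)$ and solving the resulting pathwise ODE for $Z$ (uniquely solvable for typical $\omega$ by Picard iteration against the monotone clock $\langle S\rangle(\omega)$, using Lipschitz dependence in $z$), the process $X_t:=h(S_t,Z_t)$ solves \eqref{eq:SDE limit}. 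An entirely analogous but simpler classical chain rule (without the It{\^o} correction) for BV paths shows that $X^{(n)}_t=h(S^{(n)}_t,Z^{(n)}_t)$ where $Z^{(n)}$ satisfies the \emph{same} pathwise ODE for $Z$ but with $S^{(n)}$ in place of $S$; this explains why the Stratonovich correction $\tfrac12\sigma\sigma'$ appears only in the limiting equation and not in the prelimit.

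With this representation in hand, convergence becomes a purely deterministic stability question. Fix a typical $\omega$ for which $\langle S\rangle$ exists and the continuity from Lemma~\ref{lem:integral} applies. Then $Z(\omega)$ and $Z^{(n)}(\omega)$ are driven by the same measure $\dd\langle S\rangle_\cdot(\omega)$, their defining ODEs differing only in that $S_u$ inside the coefficient is replaced by $S^{(n)}_u$. For any $T>0$ one first derives an a priori local $L^\infty$-bound on $Z^{(n)}$ (via a stopping argument exploiting linear growth of $b/\partial_z h$), and then a Gronwall estimate using local Lipschitz dependence of $b/\partial_z h$ on both arguments, together with $\|S^{(n)}-S\|_{\infty;[0,T]}\to 0$, yields $\|Z^{(n)}-Z\|_{\infty;[0,T]}\to 0$. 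Continuity of $h$ on compacts then gives $\|X^{(n)}-X\|_{\infty;[0,T]}\to 0$ for typical paths. The main technical obstacle I foresee is justifying the two-parameter It{\^o} formula above cleanly within the game-theoretic framework; I expect this to follow either by repeating the proof of Proposition~\ref{prop:Ito formula} verbatim with $du$ replaced by $\dd\langle S\rangle_u$, or by invoking F{\"o}llmer's multidimensional pathwise calculus directly on $(S,Z)$.

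For uniqueness in $\mathrm{DS}$, the cleanest route is the Doss--Sussmann representation itself: since $h(y,\cdot)$ is strictly increasing for every $y$, any continuous adapted $\tilde X$ satisfying \eqref{eq:SDE limit} can be written as $\tilde X_t=h(S_t,\tilde Z_t)$, and running the It{\^o} calculation in reverse forces $\tilde Z$ to solve the same deterministic ODE for $Z$, which is pathwise unique. Alternatively, the drift $b+\tfrac12\sigma\sigma'$ of \eqref{eq:SDE limit} is Lipschitz (as $\sigma,\sigma'$ are Lipschitz with $\sigma'$ bounded) and the diffusion $\sigma$ is Lipschitz hence $1/2$-H{\"o}lder, so the uniqueness half of Theorem~\ref{thm:Yamada--Watanabe} applies, noting per Remark~\ref{rem:no boundedness} that boundedness of the coefficients is not required for uniqueness.
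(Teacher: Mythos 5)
Your proposal is correct and follows essentially the same route as the paper: the flow $h$ is exactly the paper's $g$, your $Z$ solves the same auxiliary ODE $Y_t = x_0 + \int_0^t \rho(S_u,Y_u)b(g(S_u,Y_u))\dd\langle S\rangle_u$ (your $g_u = b(h)/\partial_z h$ is precisely the paper's $f(S_u,Y_u)$), the prelimit representation and the reduction to deterministic ODE stability are identical, and your second uniqueness argument (Yamada--Watanabe plus Remark~\ref{rem:no boundedness}) is the one the paper uses. The two-parameter It{\^o} formula you flag as the main technical obstacle is handled in the paper by invoking F{\"o}llmer's pathwise formula directly, exactly as you anticipate.
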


The proof adapts the classical probabilistic arguments, cf. e.g. \cite[Section~5.2~D]{Karatzas1988}.

\begin{proof}
  Without loss of generality, we consider $\omega \in \Omega$ such that $(\langle S\rangle_t (\omega))_{t\in [0,\infty)}$ exists in the sense of Proposition~\ref{prop:quadratic variation}, recalling that the complement of this set has outer measure zero.

  \textit{Step~1:} Let $g\colon \R^2 \to \R$ be the solution to the ordinary differential equation
  \begin{equation*}
    \frac{\partial g }{\partial x} = \sigma (g)\quad \text{and} \quad
    g(0,y) = y,\quad y\in \R.
  \end{equation*}
  Hence, for $x,y\in \R$ we get
  \begin{equation*}
    \frac{\partial^2 g}{\partial x^2} = \sigma (g)\sigma^\prime (g),\quad 
    \frac{\partial^2 g}{\partial x \partial y} = \sigma^\prime (g)\frac{\partial g}{\partial y}\quad\text{and}\quad
    \frac{\partial}{\partial y} g(0,y)=1,
  \end{equation*}
  which gives that 
  \begin{equation*}
    \frac{1}{\rho(x,y)}:=\frac{\partial}{\partial y} g(x,y) = \exp \left\{ \int_0^x  \sigma'( g(z,y)) \dd z \right\} >0.  
  \end{equation*}
  As shown in the proof of \cite[Chapter~5, Proposition~2.21]{Karatzas1988}, the continuous function
  \begin{equation*}
    f(x,y):= \rho (x,y) b(g(x,y)),\quad x,y\in \R,
  \end{equation*}
  is locally Lipschitz continuous in~$y$, bounded in $x$ and has locally linear growth in~$y$. Hence, due to \cite[Corollary~3.9]{Friz2010}, there exists a unique solution $(Y_{t}(\omega))_{t\in [0,\infty)}$ to the ordinary differential equation
  \begin{equation}\label{eq:equation Y}
    Y_t(\omega) = x_0 + \int_0^t f(S_u(\omega),Y_u(\omega)) \dd \langle S\rangle_u (\omega),
    \quad t\in [0,\infty),
  \end{equation}
  since the coefficient~$f$ is a locally Lipschitz continuous function of linear growth. This allows us to define the process $X_t := g(S_t,Y_t)$ for $t\in [0,\infty)$. Using F{\"o}llmer's pathwise It{\^o} formula~\cite{Follmer1981}, we see that 
  \begin{align*}
    X_t (\omega)
    &= g(S_0(\omega),Y_0(\omega))+ \int_0^t \frac{\partial }{\partial x} g(S_u(\omega),Y_u(\omega)) \dd S_u(\omega)\\
    &\qquad+ \frac{1}{2} \int_0^t \frac{\partial^2}{\partial x^2} g(S_u(\omega),Y_u(\omega)) \dd \langle S\rangle_u(\omega)
    +\int_0^t b(X_u(\omega)) \dd \langle S\rangle_u(\omega)\\
    &= x_0+ \int_0^t \sigma (X_u(\omega)) \dd S_u(\omega)
    +\int_0^t \bigg[ \frac{1}{2} \sigma (X_u(\omega))\sigma^{\prime}(X_u(\omega)) + b(X_u(\omega))\bigg] \dd \langle S\rangle_u(\omega),
  \end{align*}
  for $t\in [0,\infty)$. Hence, $(X_t(\omega))_{t\in [0,\infty)}$ is the unique solution to~\eqref{eq:SDE limit}. Note that the uniqueness holds due to Remark~\ref{rem:no boundedness}.
  
  \textit{Step~2:} Similar to Step~1, since $(S^{(n)}_t(\omega))_{t\in [0,\infty)}$ is of locally bounded variation, there exists a unique solution $(Y^{(n)}_t(\omega))_{t\in [0,\infty)}$ to the differential equation 
  \begin{equation}\label{eq:equation Yn}
    Y^{(n)}_t(\omega) = x_0 + \int_0^t f(S_u^{(n)}(\omega),Y_u^{(n)}(\omega)) \dd \langle S \rangle_u(\omega),\quad t\in [0,\infty).
  \end{equation}
  Moreover, we get that $X^{(n)}_{\cdot}:=g(S^{(n)}_{\cdot},Y^{(n)}_{\cdot})$ is the unique solution to~\eqref{eq:SDE approximation}. Indeed, by classical calculus, for $t\in [0,\infty)$, we have 
  \begin{align*}
    X^{(n)}_t (\omega)
    &= x_0+ \int_0^t \frac{\partial }{\partial x} g(S^{(n)}_u(\omega),Y^{(n)}_u(\omega)) \dd S^{(n)}_u(\omega)
    + \int_0^t \frac{\partial }{\partial y} g (S^{(n)}_u(\omega),Y^{(n)}_u(\omega))\dd Y^{(n)}_u(\omega)\\
    &= x_0+ \int_0^t \sigma (X^{(n)}_u(\omega)) \dd S^{(n)}_u(\omega)
    + \int_0^t b(X^{(n)}_u(\omega))\dd \langle S\rangle_u(\omega).
  \end{align*}
   
  \textit{Step~3:} Recall that $X_{\cdot}=g(S_{\cdot},Y_{\cdot})$ and $X^{(n)}_{\cdot}=g(S^{(n)}_{\cdot},Y^{(n)}_{\cdot})$. Hence, since $g$ is a continuous function and $(S^{(n)}_{\cdot}(\omega))_{n\in \N}$ converges locally uniformly to $(S_t(\omega))_{t\in [0,\infty)}$, it is sufficient to prove that $(Y^{(n)}_{\cdot}(\omega))_{t\in [0,\infty)}$ converges locally uniformly to $(Y_{t}(\omega))_{t\in [0,\infty)}$ in order to show that $(X^{(n)}_\cdot(\omega))_{n\in \N}$ converges locally uniformly to $(X_t(\omega))_{t\in [0,\infty)}$. Furthermore, notice that $(Y^{(n)}_t(\omega))_{t\in [0,\infty)}$ and $(Y_t(\omega))_{t\in [0,\infty)}$ are solutions to the ordinary differential equations~\eqref{eq:equation Yn} and~\eqref{eq:equation Y}, respectively, which are both driven by processes of locally bounded variation. Therefore, the locally uniform convergence of $(S^{(n)}_{\cdot}(\omega))_{n\in \N}$  to $(S_t(\omega))_{t\in [0,\infty)}$ implies the locally uniform convergence of $(Y^{(n)}_{\cdot}(\omega))_{t\in [0,\infty)}$ to $(Y_{t}(\omega))_{t\in [0,\infty)}$, which is a classical stability result for ordinary differential equations, see for instance \cite[Theorem~3.15]{Friz2010}.
\end{proof}

\begin{remark}
  If the drift term is of the form $b(x) = \sigma(x) \tilde b(x)$ for a locally Lipschitz continuous function $\tilde{b}$ of linear growth, then it suffices to assume that $\sigma$ is continuously differentiable and of linear growth, and that at least one of the two functions $\tilde b$ or $\sigma$ is bounded. Indeed, in that case we can apply the Lamperti transform rather than the Doss--Sussmann transform: Let $g$ be as in the previous proof. Note that $\frac{\partial}{\partial x} g(x, y) = \sigma(g(x,y))$, and since $\sigma$ is continuously differentiable, we get $\frac{\partial^2}{\partial x^2} g(x,y) = \sigma'(g(x,y)) \sigma(g(x,y))$, so $g(\cdot, y)$ is twice continuously differentiable even if $\sigma$ is only continuously differentiable. In particular, the map $\tilde b(g(\cdot,x_0))$ is locally Lipschitz continuous as concatenation of locally Lipschitz continuous maps. If $\sigma$ is bounded, then $\tilde b(g(\cdot,x_0))$ is of linear growth as concatenation of maps of linear growth, while if $\tilde b$ is bounded, of course also $\tilde b(g(\cdot,x_0))$ is bounded. Therefore, it follows again from \cite[Corollary~3.9]{Friz2010} that, for typical path $\omega \in \Omega$, there is a unique pathwise solution $(Z_t(\omega))_{t\in [0,\infty)}$ to the equation
  \[
    Z_t(\omega) = \int_0^t \tilde b(g(Z_s(\omega)+S_s(\omega),x_0))\dd \langle S\rangle_s(\omega),\qquad t \in [0,\infty).
  \]
  Then, we obtain from F\"ollmer's pathwise It\^o formula that $(X_t(\omega))_{t\in [0,\infty)}$, given by $X_t(\omega):= g(Z_t(\omega) + S_t(\omega),x_0)$, solves
  \begin{align*}
    X_t(\omega) =\ & g(0,x_0) + \int_0^t \sigma(u(Z_s(\omega) + S_s(\omega),x_0))\tilde b(g(Z_s(\omega)+S_s(\omega),x_0))\dd \langle S\rangle_s(\omega) \\
    & + \int_0^t \sigma(g(Z_s(\omega) + S_s(\omega),x_0))\dd S_s(\omega)  \\
    & + \frac12 \int_0^t \sigma'(g(Z_s(\omega) + S_s(\omega),x_0))\sigma(g(Z_s(\omega) + S_s(\omega),x_0)) \dd \langle S\rangle_s(\omega) \\
    =\ & x_0 + \int_0^t \left[ \sigma(X_s(\omega))\tilde b(X_s(\omega)) + \frac12 \sigma'(X_s(\omega))\sigma(X_s(\omega))\right] \dd \langle S\rangle_s + \int_0^t \sigma(X_s(\omega))\dd S_s(\omega).
  \end{align*}
  Uniqueness of solutions and the approximation via the solutions $(X^{(n)}_{\cdot})_{n\in \N}$ of~\eqref{eq:SDE approximation} follow exactly as in the previous proof.
  
  See also the recent paper~\cite{Karatzas2016} for closely related results on pathwise solutions of one-dimensional SDEs via Doss--Sussmann and Lamperti transforms in a semi-martingale context.
\end{remark}


\newcommand{\etalchar}[1]{$^{#1}$}
\providecommand{\href}[2]{#2}

\end{document}